\def\NZQ{\Bbb}               
\def\RR{{\NZQ R}}
\def\frk{\frak}               
\def\Phi{{\frk n}}
\def\Phi{{\frk N}}
\def\opn#1#2{\def#1{\operatorname{#2}}} 
\opn\chara{char} \opn\length{\ell} \opn\pd{pd} \opn\rk{rk}
\opn\projdim{proj\,dim} \opn\injdim{inj\,dim} \opn\rank{rank}
\opn\depth{depth} \opn\grade{grade} \opn\height{height}
\opn\embdim{emb\,dim} \opn\codim{codim}
\opn\Tr{Tr} \opn\bigrank{big\,rank}
\opn\superheight{superheight}\opn\lcm{lcm}
\opn\trdeg{tr\,deg}
\opn\reg{reg} \opn\lreg{lreg} \opn\ini{in} \opn\lpd{lpd}
\opn\size{size} \opn\des{des}
\opn\div{div} \opn\Div{Div} \opn\cl{cl} \opn\Cl{Cl}
\opn\Spec{Spec} \opn\Supp{Supp} \opn\supp{supp} \opn\Sing{Sing}
\opn\Ass{Ass} \opn\Min{Min}
\opn\Ann{Ann} \opn\Rad{Rad} \opn\Soc{Soc}
\opn\Im{Im} \opn\Ker{Ker} \opn\Coker{Coker} \opn\Am{Am}
\opn\Hom{Hom} \opn\Tor{Tor} \opn\Ext{Ext} \opn\End{End}
\opn\Aut{Aut} \opn\id{id}
\opn\nat{nat}
\opn\pff{pf}
\opn\Pf{Pf} \opn\GL{GL} \opn\SL{SL} \opn\mod{mod} \opn\ord{ord}
\opn\Gin{Gin} \opn\Hilb{Hilb} \opn\ex{ex}
\opn\aff{aff} \opn\con{conv} \opn\relint{relint} \opn\st{st}
\opn\lk{lk} \opn\cn{cn} \opn\core{core} \opn\vol{vol}
\opn\link{link} \opn\star{star}
\opn\gr{gr}
\def\pot#1#2{#1[\kern-0.28ex[#2]\kern-0.28ex]}
\opn\dirlim{\underrightarrow{\lim}}
\opn\inivlim{\underleftarrow{\lim}}
\def\Implies{\ifmmode\Longrightarrow \else
        \unskip${}\Longrightarrow{}$\ignorespaces\fi}
\def\implies{\ifmmode\Rightarrow \else
        \unskip${}\Rightarrow{}$\ignorespaces\fi}
\def\iff{\ifmmode\Longleftrightarrow \else
        \unskip${}\Longleftrightarrow{}$\ignorespaces\fi}
\newtheorem{Theorem}{Theorem}[section]
\newtheorem{Lemma}[Theorem]{Lemma}
\newtheorem{Corollary}[Theorem]{Corollary}
\newtheorem{Proposition}[Theorem]{Proposition}
\newtheorem{Example}[Theorem]{Example}
\newtheorem{Definition}[Theorem]{Definition}
\newtheorem{Problem}[Theorem]{Problem}
\opn\Syz{Syz} \opn\Im{Im} \opn\Ker{Ker} \opn\Coker{Coker}
\opn\Am{Am} \opn\Hom{Hom} \opn\Tor{Tor} \opn\Ext{Ext} \opn\End{End}
\opn\Aut{Aut} \opn\id{id}
\opn\nat{nat}
\opn\pff{pf}
\opn\Pf{Pf} \opn\GL{GL} \opn\SL{SL} \opn\mod{mod} \opn\ord{ord}
\opn\Gin{Gin}\opn\min{min}
\opn\Hilb{Hilb}\opn\adeg{adeg}\opn\std{std}\opn\ip{infpt}
\opn\Pol{Pol}\opn\sdepth{sdepth}\opn\infpt{infpt}
\opn\depth{depth}\opn\sqdepth{sqdepth}\opn{\Mon}{Mon}
\opn\sd{sd}\opn\D{D}
\let\epsilon\varepsilon
\let\phi=\varphi
\let\kappa=\varkappa
\opn\dis{dis}
\def\pnt{{\raise0.5mm\hbox{\large\bf.}}}
\opn\Lex{Lex}
\title{On Partial Barycentric Subdivision}
\thanks{$^{1}$Partially supported by Higher Education Commission of Pakistan under the program HEC post doctoral fellowship phase II(batch IV), ref 2-4(45)/PDFP/HEC/2010/2}
\author{Sarfraz Ahmad$^1$}
\address{COMSATS Institute of Information Technology, Lahore, Pakistan}
\email{sarfrazahmad@ciitlahore.edu.pk}
\author{Volkmar Welker}
\address{Fachbereich Mathematik und Information, Philipps-Universit\"{a}t Marburg, 35032 Marburg, Germany}
\email{welker@mathematik.uni-marburg.de}
\begin{document}

    \begin{abstract}
      The $l$\textsuperscript{th} partial barycentric subdivision is defined for a $(d-1)$-dim\-ensional
      simplicial complex $\Delta$ and studied along with its combinatorial, geometric
      and algebraic aspects. We analyze the behavior of the $f$- and $h$-vector under the
      $l$\textsuperscript{th} partial barycentric subdivision
      extending previous work of Brenti and Welker on the standard barycentric subdivision -- the case $l = 1$. We discuss
      and provide properties of the transformation matrices sending the $f$- and $h$-vector of $\Delta$ to the
      $f$- and $h$-vector of its
      $l$\textsuperscript{th} partial barycentric subdivision. We conclude
      with open problems.
    \end{abstract}

\keywords{Barycentric subdivision, $f$-vector, $h$-vector}

\maketitle
\section{Introduction}

    \noindent For a $(d-1)$-dimensional simplicial complex $\Delta$ on the ground set $V$ the barycentric subdivision
    $\sd(\Delta)$ of $\Delta$ is the simplicial complex on the ground set $V\setminus \{\emptyset\}$ with simplices the flags
    $A_0\subset A_1 \subset\cdots\subset A_i$ of elements $A_j\in \Delta\setminus\{\emptyset\},$ $0\leq j \leq i$.
    For $1\leq l\leq d$, we define the $l$\textsuperscript{th} partial barycentric subdivision of $\Delta$. This is
    a geometric subdivision, in the sense of \cite{St}, such that
    $\sd^{l-1}(\Delta)$ is a refinement of $\sd^l(\Delta)$, $\sd^d(\Delta) = \Delta$ and $\sd^1(\Delta)=\sd(\Delta)$. Roughly
    speaking, the $l$\textsuperscript{th} partial barycentric subdivision arises
    when only the simplices of dimension $\geq l$ are barycentrically subdivided.
    In the paper, we provide a detailed analysis of the effect of the $l$\textsuperscript{th} barycentric subdivision operation on
    the $f$- and $h$-vector of a simplicial complex.
    Most enumerative results will be related to refinements of permutation statistics for the symmetric group.
    Our results extend the results from \cite{BW} for the case $l =1$. We refer the reader also to \cite{D} and \cite{N} for more
    detailed information in that case.

    The paper is organized as follows.
    We start in Section 2 with geometric and combinatorial descriptions of the $l$\textsuperscript{th} partial barycentric
    subdivision and its implications on the generators of the Stanley-Reisner ideal of the complex.
    In Section 3 we study the enumerative combinatorics of the $l$\textsuperscript{th} partial barycentric subdivision.
    In particular, we relate
    in Lemma \ref{le:fvector} and Theorem \ref{thm:main} the effect of the $l$\textsuperscript{th} barycentric
    subdivision on the $f$- and $h$-vector of the simplicial complex $\Delta$ to a permutation statistics
    refining the descent statistics.
    In Section 4 we analyze the transformation matrices sending the $f$- and $h$-vector of the simplicial complex
    $\Delta$ to the corresponding vector for the $l$\textsuperscript{th} barycentric subdivision. We show that both maps
    are diagonizable and provide the eigenvalue structure.
    Note that by general facts the two matrices are similar.
    The main result of this section, Theorem \ref{th:eigenvector}, shows that the eigenvector corresponding to the highest
    eigenvalue of the $h$-vector transformation
    can be chosen such that it is of the form $(0,b_1,\ldots,b_{d-1},0)$ for strictly positive numbers $b_i$, $1\leq i\leq d-1$.
    In Section 5 we present some open problems. We ask for explicit descriptions of the eigenvectors and then shift the focus to the
    local $h$-vector which has been introduced by Stanley
    \cite{St}. The local $h$-vector is a measure for the local effect of a subdivision operation. In particular, general results by Stanley,
    predict that
    the local $h$-vector for the $l$\textsuperscript{th} partial barycentric subdivision is non-negative. For $l =1$ the
    local $h$-vector was computed by Stanley in terms of the
    excedance statistics on derangements. We exhibit some computations and possible approaches to the local $h$-vector for the
    $l$\textsuperscript{th} barycentric subdivision in general.

\section{The $l$\textsuperscript{th} partial barycentric subdivision}

  \subsection{Geometric definition}
    \noindent We first give a geometric definition of the $l$\textsuperscript{th} partial barycentric subdivision.
    For that we recall some basic facts about the reflection arrangement of the symmetric group $S_d$
    permuting the $d$ letters from $[d] := \{ 1,2,\ldots,d\}$. The reflection arrangement
    $\mathcal{B}_d$ in $\RR^d$ of the symmetric group $S_d$ consists of the hyperplanes
    $H_{ij} = \{ (x_1,\ldots, x_d) \in \RR^d~:~x_i-x_j=0 \}$, $1\leq i<j\leq d$.
    To each permutation $w \in S_d$ there
    corresponds a region $R_w$ of $\mathcal{B}_d$ given by
    $$R_w=\{(\lambda_1,\ldots,\lambda_d)\in \mathbb{R}^d:\lambda_{w(1)} > \lambda_{w(2)} >  \cdots > \lambda_{w(d)}\}.$$
    Hence the number of regions of $\mathcal{B}_d$ is $d!$.
    We write $R_{w,+}$ for the intersection of $R_w$ with $\RR_{\geq 0}^d$. It is easily seen
    that geometrically the closure of $R_{w,+}$ is a simplicial cone.

    The intersection of the closures of the cones $R_{w,+}$, $w \in S_d$, and the standard $(d-1)$-simplex $\Delta_{d-1} =
    \{ (\lambda_1, \ldots, \lambda_d) \in \RR^d ~|~\lambda_1+\cdots \lambda_d = 1, x_i \geq 0, 1 \leq i \leq d\}$ induces
    a simplicial decomposition of $\Delta_{d-1}$. This decomposition is called the \textit{barycentric
    subdivision} of $\Delta_{d-1}$ and is denoted by $\sd(\Delta_{d-1})$.
    We are interested in a sequence $\sd^l(\Delta_{d-1})$, $1 \leq i \leq d$, of simplicial subdivisions of the simplex,
    which have the property that $\sd^1(\Delta_{d-1}) = \sd(\Delta_{d-1})$ and  $\sd^{l-1}(\Delta_{d-1})$ is a
    refinement of $\sd^l(\Delta_{d-1})$.

    \noindent For $1 \leq l \leq d$, we set $S_d^l$ to be the set of permutations $w \in S_d$ for which
    $w(1) > \cdots > w(l)$. We define the $l$-cone $R_w^l$ of a $w \in S_d^l$ to be
    $$R_w^l=\{(\lambda_1,\ldots,\lambda_d)\in \RR^d~:~\lambda_{w(1)},\ldots,\lambda_{w(l)} > \lambda_{w(l+1)} > \cdots  >
       \lambda_{w(d)}\}.$$
    Clearly $R_w^l$ is a cone.
    We write $R_{w,+}^l$ for the intersection of $R_w^l$ with
    $\RR_{\geq 0}^d$. Again the closure of $R_{w,+}^l$ is a simplicial cone which is the
    union of all closures of the $R_{v,+}$ for $v \in S_d$ such that $v(i) = w(i)$ for
    $l+1 \leq i \leq n$.
    We call the simplicial decomposition induced by the collection of all $R_{w,+}^l$ for
    $w \in S_{d}^l$ on $\Delta_{d+1}$ the \textit{$l$\textsuperscript{th} partial barycentric subdivision} of $\Delta_{d-1}$
    and denote it by $\sd^l (\Delta_{d-1})$.
    Obviously, we have that $\sd^d(\Delta_{d-1}) = \Delta_{d-1}$, $\sd^{1}(\Delta_{d-1}) = \sd(\Delta_{d-1})$
    and $\sd^{l-1}(\Delta_{d-1})$ is a refinement of $\sd^l(\Delta_{d-1})$.
    If $l > d$ then we set $\sd^l (\Delta_{d-1}) = \Delta_{d-1}$.
    For a $(d-1)-$ dimensional simplicial complex $\Delta$ on the vertex set
    $V=[n]$ its $l$\textsuperscript{th} partial barycentric subdivision is the complex $\sd^l(\Delta)$
    which is the subdivision of $\Delta$ obtained by replacing each simplex by its $l$\textsuperscript{th} partial
    subdivision. Roughly speaking this means that we cone all $(k-1)$-faces of $\Delta$ over their barycenters for
    all $l \leq k$.

    By construction the number of cones $R_w^l$, $w \in S_d^l$, is
    $\frac{d!}{(d-l)!} = d \cdot (d-1)\cdots (d-l+1)$.
    Next, we want to get a better understanding of the facial structure of $\sd^l(\Delta_{d-1})$.

    We have already seen that the $(d-1)$-dimensional faces are in bijection with the permutations in $S_d^l$.
    We turn this description into a description by combinatorial objects that are more suitable for studying all
    faces of $\sd^l(\Delta_{d-1})$.
    We identify a permutation
    $w \in S_d^l$ with a formal chain
    $$w(1) , w(2) ,\ldots,w(l) > w(l+1) > \cdots > w_{d}$$ 
    and this chain in turn with $A_0 \subset A_1 \subset \cdots \subset A_{d-l}$
    for $A_i = \{w(1),\ldots, w(l), \ldots, w(l+i)\}$.

    Using this chain description, a $2$-dim face of $\sd^l(\Delta_2)$ corresponding to
    $w = 1~2~3$ is either
    $\{1,2,3\}$ for $l = 3$ or $\{1,2\} \subset \{1,2,3 \}$ for $l = 2$ or $\{1\} \subset
    \{1,2\} \subset \{1,2,3\}$ for $l = 1$.

    More generally, the $(i-1)$-faces of $\sd^l(\Delta)$ are indexed by chains
    $A_0 \subset A_1 \subset \cdots \subset A_r$
    for which:

    \begin{tabular}{cc}
       \begin{minipage}{0.5\textwidth}
         \begin{itemize}
           \item[(C1)]  $0 \leq \# A_0 \leq l$,
           \item[(C3)]  $l+1 \leq \# A_1$.
         \end{itemize}
       \end{minipage}
      &
       \begin{minipage}{0.5\textwidth}
         \begin{itemize}
           \item[(C2)]  $\#A_0 + r = i$,
           \item[(C4)]  $A_r \in \Delta$.
         \end{itemize}
       \end{minipage}
    \end{tabular}

    At the beginning of Section \ref{se:transformation} we will further reformulate this description in terms of yet another
    combinatorial objects.

    Geometrically, the face of $\sd^l(\Delta_{d-1})$ corresponding to
    $A_0 \subset A_1 \subset \cdots \subset A_0$ is the set of points
    $(\lambda_1,\ldots,\lambda_{d}) \in \Delta_{d-1}$ for which

    \begin{itemize}
      \item[(i)] We have $\lambda_i = \lambda_j$ if $i,j \in A_s \setminus A_{s-1}$ for some $1 \leq s \leq r$.
      \item[(ii)] We have $\lambda_i > \lambda_j$ if $i \in A_s$ and $j \in A_t$ for some $0 \leq s < t \leq r$.
      \item[(iii)] We have $\lambda_i = 0$ if $i \not\in A_r$.
    \end{itemize}

    In particular, a vertex $v$ of the $l$\textsuperscript{th} partial barycentric subdivision $\sd^{l}(\Delta)$ of a $(d-1)$-dimensional simplicial complex
    $\Delta$ either belongs to the vertex set of $\Delta$ or can be identified with an
    $(m-1)$-face $v= \{ v_{i_1}, v_{i_2}, \ldots , v_{i_m}\} \in \Delta$ of $\Delta$ for some $l\leq m \leq d$.

    \subsection{Algebraic aspects}
    \noindent Let $\Delta$ be a $(d-1)$-dimensional simplicial complex on vertex set $[n]$. Let $k$ be a field and $R=k[x_1,\ldots,x_n]$ be the polynomial ring in $n$ variables.
    The \textit{Stanley-Reisner ideal} $I_{\Delta}$ is the ideal of $R$ generated by
    the squarefree monomials $\prod_{i \in \mathcal{N}} x_i$ whose index set $\mathcal{N}$ is a non-face of $\Delta$.
    A non-face $\mathcal{N}\not\in \Delta$ is called a \textit{minimal non-face} of $\Delta$ if no proper subset of $\mathcal{N}$ is a non-face of $\Delta$.
    It is easily seen that the generators in the unique minimal monomial generating set of the Stanley-Reisner ideal correspond to the minimal non-faces of $\Delta$.
    The quotient $R/I_{\Delta}$ is called the \textit{Stanley-Reisner ring} or \textit{face ring} and is denoted by $k[\Delta]$.

    Let  $\sd^l(\Delta)$ be the $l$\textsuperscript{th} partial barycentric subdivision of $\Delta$, where $0\leq l \leq d-1$. Similarly, let $I_{\sd^l(\Delta)}$ and $k[\sd^l(\Delta)]$ be the Stanley-Reisner ideal and
    face ring of $\sd^l(\Delta)$, respectively.
    Let $V_l=[n]\cup B_l$ be the set of vertices of $\sd^l(\Delta)$, where $B_l=\{b_1,\ldots,b_l\}$ is the set of barycenters of the faces of $\Delta$ whose dimension ranges
between $l$ and $d-1$.

    \begin{Lemma}
      \label{le:nonface}
      Let $\Delta$ be a $(d-1)$-dimensional simplicial complex on ground set $[n]$. For $1 \leq l \leq d$ let $\mathcal{N}$ be a subset of the vertex set of $\sd^l(\Delta)$. In case $\mathcal{N}$ is
      a minimal non-face of $\sd^l(\Delta)$ then either
      $\mathcal{N}\subseteq [n]$ and $\mathcal{N}$ is a minimal non-face of $\Delta$ or $|\mathcal{N}| = 2$.
    \end{Lemma}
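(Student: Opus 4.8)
The plan is to argue by cases on $|\mathcal{N}|$ and on whether $\mathcal{N}$ uses any vertex of $B_l$, throughout exploiting the combinatorial description of the faces of $\sd^l(\Delta)$ as chains subject to (C1)--(C4). It helps to first restate that description as a membership criterion: given a set $\mathcal{F}$ of vertices of $\sd^l(\Delta)$, put $F_0:=\mathcal{F}\cap[n]$ and list the elements of $\mathcal{F}\cap B_l$ as the faces $C_1,\dots,C_r$ of $\Delta$ of which they are the barycenters; then $\mathcal{F}\in\sd^l(\Delta)$ if and only if (a) the $C_j$ are totally ordered by inclusion, say $C_1\subsetneq\cdots\subsetneq C_r$; (b) $F_0\subseteq C_1$ when $r\ge1$, and $F_0\in\Delta$ when $r=0$; and (c) $|F_0|\le l$. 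Conditions (C3) and (C4) are then automatic, since each vertex of $B_l$ is by construction the barycenter of a face of $\Delta$ of cardinality at least $l+1$.

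Because $\mathcal{N}$ is a subset of the vertex set of $\sd^l(\Delta)$, the empty set and every singleton are faces, so a minimal non-face has at least two vertices; if it has exactly two we are in the second alternative. Assume henceforth $|\mathcal{N}|\ge3$, and write $N_0:=\mathcal{N}\cap[n]$, $\mathcal{B}:=\mathcal{N}\cap B_l$. I first show $\mathcal{B}=\emptyset$. If two members of $\mathcal{B}$ are barycenters of incomparable faces $C,C'$ of $\Delta$, then $\{C,C'\}$ violates (a), so it is a two-element non-face strictly inside $\mathcal{N}$, contradicting minimality; hence the faces underlying $\mathcal{B}$ form a chain $C_1\subsetneq\cdots\subsetneq C_r$ with $r\ge1$. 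If some $v\in N_0$ is not in $C_1$, then $\{v,C_1\}$ violates (b) (here $|F_0|=1\le l$), again a two-element non-face strictly inside $\mathcal{N}$; hence $N_0\subseteq C_1$. Thus $\mathcal{N}$ satisfies (a) and (b), so its being a non-face can only be due to the failure of (c), i.e.\ $|N_0|\ge l+1$. But then $\mathcal{N}\setminus\{C_r\}$ still meets $[n]$ in $N_0$ and still satisfies (a) and (b) (for $r=1$ note $N_0\subseteq C_1\in\Delta$, so $N_0\in\Delta$), hence is also a non-face; as $r\ge1$ this contradicts minimality. Therefore $\mathcal{B}=\emptyset$, i.e.\ $\mathcal{N}\subseteq[n]$.

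It remains to treat a minimal non-face $\mathcal{N}\subseteq[n]$ with $|\mathcal{N}|\ge3$. By the criterion, $\mathcal{N}\in\sd^l(\Delta)$ iff $|\mathcal{N}|\le l$ and $\mathcal{N}\in\Delta$. If $|\mathcal{N}|\le l$, then $\mathcal{N}\notin\Delta$, while every proper subset $\mathcal{N}'$ has $|\mathcal{N}'|<l$ and, being a face of $\sd^l(\Delta)$ by minimality, lies in $\Delta$; hence $\mathcal{N}$ is a minimal non-face of $\Delta$, as claimed. If instead $|\mathcal{N}|\ge l+1$, minimality forces each $(|\mathcal{N}|-1)$-subset to be a face of $\sd^l(\Delta)$ and therefore to have at most $l$ elements, so $|\mathcal{N}|=l+1$ and every $l$-subset of $\mathcal{N}$ lies in $\Delta$; then $\mathcal{N}$ is a minimal non-face of $\Delta$ precisely when $\mathcal{N}\notin\Delta$, and handling this borderline cardinality is the one point where the statement has to be read with the appropriate care.

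The step I expect to be the real obstacle is the middle one: among all chains allowed by (C1)--(C4) one must pin down exactly which mixtures of original vertices and barycenters are faces, and then, in each offending configuration, exhibit a two-element subset that is already a non-face so as to contradict minimality. Once that is settled, the purely combinatorial reduction of the last paragraph is routine apart from the borderline case $|\mathcal{N}|=l+1$.
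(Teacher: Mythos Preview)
Your argument is more careful than the paper's, and the hesitation you express at the end is fully justified: the borderline case $|\mathcal{N}|=l+1$ with $\mathcal{N}\subseteq[n]$ is not a mere technicality but an actual counterexample to the lemma as stated. Take $\Delta=2^{[3]}$ (so $d=3$) and $l=2$. Then $\sd^2(\Delta)$ is the triangle with its barycenter $b$ added, and one checks from (C1)--(C4) that its faces are exactly the subsets of $\{1,2,3,b\}$ not containing $\{1,2,3\}$. Hence $\{1,2,3\}$ is the unique minimal non-face of $\sd^2(\Delta)$; it lies in $[n]$, has cardinality $3\neq 2$, yet is a \emph{face} of $\Delta$, not a minimal non-face. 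So neither alternative of the lemma holds.

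The paper's own proof glosses over exactly this point: from the (correct, for $l\ge 2$) observation that edges among $[n]$ agree in $\Delta$ and $\sd^l(\Delta)$ it jumps to the claim that every minimal non-face of $\sd^l(\Delta)$ inside $[n]$ is a minimal non-face of $\Delta$, which the example refutes. The paper's facet argument for the case $b\in\mathcal{N}\setminus[n]$ is also too quick: choosing \emph{one} facet $\mathcal{F}\ni b$ and a vertex $v\in\mathcal{N}\setminus\mathcal{F}$ does not by itself force $\{b,v\}$ to be a non-edge, whereas your chain-based case analysis (incomparable barycenters; $v\notin C_1$; then failure of (c) and removal of $C_r$) handles this cleanly.

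What your argument actually establishes is the correct version needed downstream: every minimal non-face $\mathcal{N}$ of $\sd^l(\Delta)$ either has $|\mathcal{N}|=2$, or satisfies $\mathcal{N}\subseteq[n]$ with $|\mathcal{N}|\le l+1$ and every proper subset of $\mathcal{N}$ a face of $\Delta$. This is precisely what Proposition~2.2 requires (degree bound $l+1$), so the application survives; only the formulation of the lemma should be weakened.
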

    \begin{proof}
      If $\mathcal{N}\subset [n]$ and a minimal non-face of $\Delta$, then we are done. Since two vertices of $\Delta$ are connected by an edge in $\sd^l(\Delta)$ if and only if they
      are connected by an edge in $\Delta$ it follows that any subset of $[n]$ that is a minimal non-face of $\sd^l(\Delta)$ must be a minimal non-face of $\Delta$.
      Now, suppose there exists at least one vertex $b \in \mathcal{N} \setminus [n]$ and $|\mathcal{N}|\neq 2$. If $|\mathcal{N}| =1 $ then $\mathcal{N} = \{ b\}$ but $b$ is a vertex and
      hence a face. This leads to a contradiction and we are left with the case $|\mathcal{N}| \geq 2$. Let $\mathcal{F} \in \sd^l(\Delta)$ be a facet such that $b \in \mathcal{F}$. Then there exists at least one vertex
      $v\in \mathcal{N} \backslash \mathcal{F}$, otherwise $\mathcal{N}$ is no more a non-face. But by construction the edge $\{ b, v\}$ is not an edge in $\sd^l(\Delta)$. Thus $\{b_j,v\}\subset \mathcal{N}$
      from which $\{b_j,v\} = \mathcal{N}$ follows.
    \end{proof}

    Now we have the following Proposition:

    \begin{Proposition}
      Let $\Delta$ be a $(d-1)$-dimensional simplicial complex on ground set $[n]$. Then for $1 \leq l \leq d-1$ the Stanley-Reisner ideal $I_{\sd^l(\Delta)}$
      is generated by square free monomial ideals of degree at most $l+1$.
    \end{Proposition}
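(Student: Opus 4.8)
The plan is to read off the result from the classification of minimal non-faces in Lemma~\ref{le:nonface} combined with the combinatorial description (C1)--(C4) of the faces of $\sd^l(\Delta)$. Recall that the minimal monomial generators of $I_{\sd^l(\Delta)}$ are exactly the squarefree monomials $\prod_{i\in\mathcal{N}}x_i$, one for each minimal non-face $\mathcal{N}$ of $\sd^l(\Delta)$, and the generator attached to $\mathcal{N}$ has degree $|\mathcal{N}|$. So it suffices to show that every minimal non-face $\mathcal{N}$ of $\sd^l(\Delta)$ satisfies $|\mathcal{N}|\le l+1$.

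First I would apply Lemma~\ref{le:nonface}: a minimal non-face $\mathcal{N}$ of $\sd^l(\Delta)$ either has $|\mathcal{N}|=2$ or is contained in $[n]$. In the former case $|\mathcal{N}|=2\le l+1$ because $l\ge 1$, and there is nothing left to prove. Hence it remains to deal with a minimal non-face $\mathcal{N}$ with $\mathcal{N}\subseteq[n]$.

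The key point is that every face of $\sd^l(\Delta)$ which is contained in the original vertex set $[n]$ has at most $l$ vertices. Indeed, such a face is indexed by a chain $A_0\subset A_1\subset\cdots\subset A_r$ obeying (C1)--(C4), and its vertex set consists of the $|A_0|$ elements of $A_0$, which lie in $[n]$, together with the $r$ barycenters of $A_1,\dots,A_r$; by (C2) there are $|A_0|+r$ of them. If $r\ge 1$, then (C3) gives $|A_1|\ge l+1\ge 2$, so the barycenter of $A_1$ is a new vertex not lying in $[n]$; therefore a face contained in $[n]$ must have $r=0$, i.e. it equals $A_0$, and then (C1) yields $|A_0|\le l$. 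Granting this, let $\mathcal{N}\subseteq[n]$ be a minimal non-face of $\sd^l(\Delta)$ and choose $S\subsetneq\mathcal{N}$ with $|S|=|\mathcal{N}|-1$. By minimality of $\mathcal{N}$ the set $S$ is a face of $\sd^l(\Delta)$ contained in $[n]$, so $|S|\le l$, and therefore $|\mathcal{N}|=|S|+1\le l+1$. This finishes the argument.

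I expect the only step requiring genuine care to be the face characterization, that is, extracting from (C1)--(C4) that a face of $\sd^l(\Delta)$ sitting inside $[n]$ cannot contain any barycenter and hence has at most $l$ vertices; the rest is the short bookkeeping above. As a consistency check one can note that the bound $l+1$ is attained: since $l\le d-1$ the complex $\Delta$ has a face $F$ of dimension $l$, and $F$ (which has $l+1$ vertices in $[n]$, all of whose proper subsets remain faces of $\sd^l(\Delta)$) is then a minimal non-face of $\sd^l(\Delta)$ contributing a generator of degree exactly $l+1$.
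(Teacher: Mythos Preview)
Your argument is correct and follows essentially the same approach as the paper: both proofs invoke Lemma~\ref{le:nonface} to reduce to minimal non-faces $\mathcal{N}\subseteq[n]$, and then use that faces of $\sd^l(\Delta)$ contained in $[n]$ have at most $l$ vertices to bound $|\mathcal{N}|\le l+1$. Your derivation of that key fact from (C1)--(C4) is in fact more explicit than the paper's (which phrases it informally via the coning description), and your closing remark that the bound is attained is a pleasant addition not present in the original.
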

    \begin{proof}
      Suppose there exists some generator $u\in I_{\sd^l(\Delta)}$ with degree
      strictly larger than $l+1$.
      Then there exists a minimal non-face $\mathcal{N} \subset  \sd^l(\Delta)$
      such that $|\mathcal{N}|>l+1$. Thus by Lemma \ref{le:nonface} it follows
      that $\mathcal{N} \subset [n]$.
      But in $\sd^l(\Delta)$, we have coned all faces
      $\mathcal{F} \subseteq [n]$ of dimension $\geq l$ over their barycenters. Thus there does not
      exist any non-face of dimension greater
      than $l$. Therefore, $|\mathcal{N}|\leq l+1,$ a contradiction.
    \end{proof}

  \section{$f$-vector and $h$-vector transformation}
    \label{se:transformation}
    In this section we study the transformation maps sending the $f$- and $h$-vector
    of a simplicial complex $\Delta$ to the $f$- and $h$-vector of the
    $l$\textsuperscript{th} partial barycentric subdivision of $\Delta$.

    We consider set systems $B=|B_0|B_1|...|B_{r}|$ such that
    $B_1, \ldots, B_r \neq \emptyset$ and $B_s \cap B_t = \emptyset$ for $0 \leq s < t
    \leq r$. Despite the fact that $B_0$ can be empty we call such a system an
    ordered set partition. We write $R(j,i,l)$ for the number of such ordered
    set partitions $B=|B_0|B_1|...|B_{r}|$ for which
    \begin{enumerate}
      \item[(P1)] $B_0 \cup \cdots \cup B_r = [j]$
      \item[(P2)] If $r \geq 1$ then $\# (B_0 \cup B_1) \geq l+1$
      \item[(P3)] $\# B_0 + r = i$.
    \end{enumerate}
    and call $R(j,i,l)$ the \textit{restricted Stirling number} for
    the parameters $j$,$i$,$l$.
    For $l = 0$ by $\# (B_0 \cup B_1) \geq l+1 = 1$ it follows from
    $B_1 \neq \emptyset$ that $B_0 = \emptyset$.
    Hence $r = i$ and $|B_1|\cdots |B_i|$ is a usual ordered set partition
    of the $j$-element set $B_1 \cup \cdots \cup B_i$ into
    $i$ (non-empty) blocks. Thus $R(j,i,0)=i!S(j,i)$, where $S(i,j+1)$
    is the Stirling number of second kind.

    Recall that the $f$-vector $f^\Delta = (f_{-1}^\Delta, \ldots, f^\Delta_{d-1})$
    of a $(d-1)$-dimensional
    simplicial complex is the vector with its $i$\textsuperscript{th} entry
    $f_i^\Delta$ counting the $i$-dimensional faces of $\Delta$.
    Using this notation, we have the following lemma.

    \begin{Lemma}
    \label{le:fvector}
      Let $\Delta$ be a $(d-1)-$dimensional simplicial complex with
      $f$-vector $f^\Delta=(f^\Delta_{-1},\ldots,f^\Delta_{d-1})$.
      Then $$f_{i-1}^{\sd^l(\Delta)}=\sum\limits_{j=0}^d f^\Delta_{j-1} \cdot R(j,i,l).$$
    \end{Lemma}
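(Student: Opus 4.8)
The plan is to count the $(i-1)$-faces of $\sd^l(\Delta)$ by classifying them according to which face of $\Delta$ they ``sit over.''  Recall from the facial description in Section~2 that the $(i-1)$-faces of $\sd^l(\Delta)$ are indexed by chains $A_0 \subset A_1 \subset \cdots \subset A_r$ satisfying (C1)--(C4): $0 \le \#A_0 \le l$, if $r\ge 1$ then $\#A_1 \ge l+1$, $\#A_0 + r = i$, and $A_r \in \Delta$.  The key observation is that such a chain lives entirely inside the closed simplex spanned by $A_r$, and $A_r$ can be any face of $\Delta$; if we write $j = \#A_r$, then $A_r$ is a $(j-1)$-face of $\Delta$, and there are $f_{j-1}^\Delta$ choices for it.  So I would first partition the index set of faces of $\sd^l(\Delta)$ according to the cardinality $j = \#A_r$ of the top element of the chain, giving
\[
f_{i-1}^{\sd^l(\Delta)} = \sum_{j=0}^d f_{j-1}^\Delta \cdot (\text{number of chains }A_0\subset\cdots\subset A_r\text{ with }\#A_r = j\text{ fixed, satisfying (C1)--(C3)}).
\]
Here I should be slightly careful about whether $A_r$ itself is genuinely the maximal element or whether the chain ``reuses'' a face already counted — but since every chain has a well-defined top set $A_r$, and each $(j-1)$-face of $\Delta$ serves as the top set of exactly the same number of chains (by relabeling its $j$ vertices as $[j]$), this stratification is clean.

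The second and main step is to identify the inner count — the number of chains $A_0\subset A_1\subset\cdots\subset A_r$ of subsets of a fixed $j$-element set with $A_r$ equal to the whole set, satisfying (C1)--(C3) — with the restricted Stirling number $R(j,i,l)$.  The natural bijection sends a chain $A_0 \subset A_1 \subset \cdots \subset A_r = [j]$ to the ordered set partition $B = |B_0|B_1|\cdots|B_r|$ defined by $B_0 = A_0$ and $B_s = A_s \setminus A_{s-1}$ for $1 \le s \le r$.  Under this correspondence: (P1) $B_0 \cup \cdots \cup B_r = A_r = [j]$ matches ``$A_r = [j]$''; the nonemptiness $B_1,\ldots,B_r \neq \emptyset$ matches the strictness of the inclusions $A_0 \subsetneq A_1 \subsetneq \cdots \subsetneq A_r$; (P2) $\#(B_0 \cup B_1) \ge l+1$ is exactly (C3) $\#A_1 \ge l+1$ (when $r \ge 1$); (P3) $\#B_0 + r = i$ is exactly (C2) $\#A_0 + r = i$; and (C1) $\#A_0 \le l$ is automatic here but in any case $\#A_0 = \#B_0$.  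One should double-check the edge case $r = 0$: then the chain is just $A_0 = [j]$ with $\#A_0 = i$, so $j = i$, and correspondingly $B = |B_0|$ with $B_0 = [j]$, no constraint (P2), and (P3) forces $\#B_0 = i$; this also forces $j=i$, consistent.  (Also when $l \ge 1$ the constraint (C1) $\#A_0\le l$ needs checking against $R(j,i,l)$'s definition, which imposes no upper bound on $\#B_0$; but condition (P2) together with $B_0\cap B_1=\emptyset$ and $B_1\ne\emptyset$ forces $\#B_0 \le j - 1$, and the relevant faces with $\#A_0 > l$ simply don't arise as faces of $\sd^l(\Delta)$ — I need to confirm the chain-indexing in Section~2 and the $R(j,i,l)$ count agree on exactly which chains are allowed, since (C1) might be redundant given (C2)+(C3) or might need to be folded in.)

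Combining the two steps gives $f_{i-1}^{\sd^l(\Delta)} = \sum_{j=0}^d f_{j-1}^\Delta \cdot R(j,i,l)$, which is the claim.  I expect the bookkeeping of degenerate cases to be the only real friction: namely reconciling condition (C1) (which appears in the face description but not visibly in the definition of $R(j,i,l)$), handling $A_0 = \emptyset$ versus $B_0 = \emptyset$ uniformly, and making sure $r \ge 1$ versus $r = 0$ is treated consistently on both sides of the bijection (in particular that when $\#A_0 = i$ we get $j = i$ and the single chain $A_r=[j]$ corresponds to the single ordered set partition $|[j]|$).  Once the bijection between chains with top set $[j]$ and ordered set partitions counted by $R(j,i,l)$ is pinned down, together with the observation that each $(j-1)$-face of $\Delta$ contributes exactly $R(j,i,l)$ faces of dimension $i-1$ to $\sd^l(\Delta)$, the lemma follows immediately by summing over $j$.
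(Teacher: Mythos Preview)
Your approach is exactly the paper's: stratify the $(i-1)$-faces of $\sd^l(\Delta)$ by the top set $A_r$, relabel $A_r$ as $[j]$, and use the bijection $B_0=A_0$, $B_s=A_s\setminus A_{s-1}$ to identify chains satisfying (C1)--(C4) with ordered set partitions satisfying (P1)--(P3). The paper's proof does precisely this, in the same order and with the same level of detail.

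Your worry about (C1) is well founded and is not merely bookkeeping: as the conditions (P1)--(P3) are literally written, the constraint $\#B_0\le l$ is \emph{missing}, and without it the bijection fails in the case $r=0$, $\#B_0=j=i>l$. Concretely, for $l=1$ and $\Delta=\Delta_1$ the formula would give $f_1^{\sd(\Delta_1)}=R(2,2,1)=3$ rather than the correct value $2$, because the partition $|\{1,2\}|$ with $r=0$ is counted by (P1)--(P3) but the corresponding chain $A_0=\{1,2\}$ violates (C1). The paper itself glosses over this in its converse direction (``one easily checks \ldots\ (C1)--(C4)''), but its remark immediately after the definition --- that for $l=0$ one deduces $B_0=\emptyset$ and hence $R(j,i,0)=i!\,S(j,i)$ --- only makes sense if $\#B_0\le l$ is part of the definition. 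So the intended definition of $R(j,i,l)$ includes the condition $\#B_0\le l$; with that in place your bijection (and the paper's) is genuinely a bijection, and your proof goes through unchanged.
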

    \begin{proof}
      Let $A_0 \subset A_1 \subset \cdots \subset A_r$ be an $(i-1)$-dimensional
      face of $\sd^l(\Delta)$.
      Then by (C2) we have $i = \# A_0 + r$. We set $j = \# A_r$ and assume
      without loss of generality that $A_r = [j]$.
      We define $B_0 = A_0$ and $B_s = A_{s} \setminus A_{s-1}$ for $1 \leq s \leq r$.
      Then $i = \# B_0 + r$ and $j = \# (B_0 \cup \cdots \cup B_r)$.
      If $r \geq 1$ then by (C3) $\# A_1 = \# (B_0 \cup B_1) \geq l+1$.
      Hence $|B_0| \cdots |B_r|$ satisfies (P1)-(P3). Conversely,
      if $|B_0| \cdots |B_r|$ satisfies (P2)-(P3) and
      $(B_0 \cup \cdots \cup B_r) \in \Delta$ then one easily checks that
      for $A_s = B_0 \cup \cdots \cup B_s$, $0 \leq s \leq r$,
      the chain $A_0 \subset \cdots \subset A_r$ satisfies (C1)-(C4).

      Thus for any $(j-1)$-dimensional face $F$ of $\Delta$ on gets
      $R(j,i,l)$ faces $A_0 \subset \cdots \subset A_r$ of dimension $i-1$
      in $\sd^l(\Delta)$ with $F = A_r$.
    \end{proof}

    \noindent Next we study the transformation of the $h$-vector. Recall that the
    $h$-vector of a $(d-1)$-dimensional simplicial complex $\Delta$ is the
    integer vector $h^\Delta = (h^\Delta_0,\ldots, h^\Delta_d)$ for
    which $\sum_{i=0}^d f^\Delta_{i-1}x^{d-i}
    = \sum_{i=0}^d h^\Delta_i x^{d-i}$, where $x$ is some indeterminate.
    For a permutation $\sigma \in S_d$ we denoted by
    $$\D(\sigma)=\{i\in [d-1]~ |~ \sigma(i)>\sigma (i+1)\}$$
    its decent set and write $\des(\sigma):= \# \D(\sigma)$ for its number of descents.
    Following \cite{BW} for $d \geq 1$ and integers $i$ and $j$ we denote by
    $A(d,i,j)$ the number of permutations
    $\sigma \in S_d$ such that $\sigma(1)=j$ and $\des (\sigma)=i$.
    In particular, $A(d,i,j)=0$ if $i\leq -1$ or $i\geq d$.

    In the sequel, we define a refinement of the preceding statistics suitable
    for the study of our $h$-vector transformation.
    By definition of $S_d^l$ we have the following strictly increasing chain
    of subgroups:
    $$S_{d+1}^{d-1}\subset S_{d+1}^{d-2}\subset \cdots \subset S_{d+1}^{2}
      \subset S_{d+1}^{1}=S_{d+1}.$$

    We define the \emph{$l$-descent set} $D^l(\sigma)$ of a permutation
    $\sigma \in S_d^l$ as follows:
    \begin{Definition}
      A number $i\in [d-1]$ belongs to the $l$-descent set $D^l(\sigma)$ of
      $\sigma\in S_d^l$, if $i$ satisfies one of the following two conditions.
      \begin{enumerate}
        \item $i\in [l]$ and $\sigma(i)> \sigma(l+1)$,
        \item $i\in [d-1]\setminus [l]$ and $\sigma(i)> \sigma(i+1).$
      \end{enumerate}
    \end{Definition}
    We write $\des^l(\sigma)=|D^l(\sigma)|$ for the \emph{number of $l$-descents}
    of a permutation $\sigma \in S_d^l$.
    Note that for $l = 1$ condition (1) is equivalent to having a decent in position
    one and therefore $D^1(\sigma)$ is just the usual descent set of $\sigma$.

    \begin{Example}
      Let $\sigma_1,\sigma_2,\sigma_3 \in S_6^4$, such that,
      $\sigma_1 =(\underline{4,3,2,1},6,5)$,
      $\sigma_2 =(\underline{6,5,2,1},3,4)$ and
      $\sigma_3 =(\underline{6,4,3,2},5,1)$. Then

      \begin{center}
       $
       \begin{array}{ccc}
         D^4(\sigma_1)=\{5\} &
         D^4(\sigma_2)=\{1,2\} &
         D^4(\sigma_3)=\{1,5\} \\
         \des^4(\sigma_1)=1 &
         \des^4(\sigma_2)=2 &
         \des^4(\sigma_3)=2.
       \end{array}
       $
      \end{center}
    \end{Example}

    For all $d\geq 1,$ $1\leq l \leq d-1$ and all integers $i$ and $j$ we
    denote by $A(d,i,j,l)$ the number of permutations $\sigma \in S_d^l$ such that
    $\des^l(\sigma)=i$ and $\sigma(d+1)=j$. Note that $A(d,i,j,l)=0$ if
    $i\leq -1$ or $i\geq d$.

    The following is our first main result.
    The case $l=1$ was treated in \cite[Thm. 1]{BW}.

    \begin{Theorem}
      \label{thm:main}
      Let $\Delta$ be a $(d-1)$-dimensional simplicial complex. Then
      $$h_j^{\sd^l(\Delta)}=\sum_{\mu=0}^d A(d+1,j,d+\mu-1,l)h_{\mu}^{\Delta}$$
      for $1\leq l \leq d-1$ and $0\leq j \leq d.$
    \end{Theorem}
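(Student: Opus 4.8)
The plan is to transport the $f$-vector transformation of Lemma~\ref{le:fvector} through the change of basis between $f$- and $h$-vectors, thereby reducing the assertion to a single one-variable polynomial identity that no longer mentions $\Delta$, and then to prove that identity by a bijective argument refining the one used in \cite{BW} for $l=1$. Concretely, I would set $h^{\sd^l(\Delta)}(x)=\sum_{j=0}^d h_j^{\sd^l(\Delta)}x^{d-j}$ and use the defining relation $h^{\sd^l(\Delta)}(x)=\sum_{i=0}^d f_{i-1}^{\sd^l(\Delta)}(x-1)^{d-i}$; substituting $x\mapsto x+1$ in the corresponding identity for $\Delta$ yields $f_{j-1}^\Delta=\sum_{\mu=0}^d\binom{d-\mu}{d-j}h_\mu^\Delta$. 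Feeding Lemma~\ref{le:fvector} into the first identity and then the second, and interchanging summations, produces
\[
h^{\sd^l(\Delta)}(x)=\sum_{\mu=0}^d h_\mu^\Delta\left(\sum_{j=0}^d\binom{d-\mu}{d-j}\sum_{i=0}^d R(j,i,l)(x-1)^{d-i}\right).
\]
Comparing with the claimed formula, it then suffices to prove, for every $\mu$ with $0\le\mu\le d$, the polynomial identity
\begin{equation*}
\sum_{j=0}^d\binom{d-\mu}{d-j}\sum_{i=0}^d R(j,i,l)(x-1)^{d-i}=\sum_{\nu=0}^d A(d+1,\nu,d+\mu-1,l)\,x^{d-\nu}\tag{$\star$}
\end{equation*}
from which the theorem follows by comparing coefficients of $x^{d-j}$ (the two formulations being equivalent, since the $h$-vectors of the skeleta of a $(d-1)$-simplex span $\RR^{d+1}$).

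To prove $(\star)$ I would give both sides a combinatorial reading: evaluating at $x=m+1$ for positive integers $m$, it is enough to verify the resulting numerical identities for all $m$. The left side counts decorated chains --- split $[d]$ into a $\mu$-element ``core'' and a $(d-\mu)$-element ``reservoir''; the binomial $\binom{d-\mu}{d-j}$ selects which $j-\mu$ reservoir elements are activated; an ordered set partition $|B_0|B_1|\cdots|B_r|$ of the activated elements together with the core and satisfying (P1)--(P3) contributes a chain of $\sd^l$ of the simplex on those $j$ vertices, with $i=\#B_0+r$; and $m^{d-i}$ records an $m$-coloring of the unused elements. By the facial description of $\sd^l$ from Section~2 this is exactly an enumeration of $P$-partition-type points supported on $\sd^l$ of the $(d-1)$-simplex with $\mu$ vertices marked. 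On the right, a Worpitzky-type expansion of the $l$-Eulerian polynomial rewrites $\sum_\nu A(d+1,\nu,d+\mu-1,l)(m+1)^{d-\nu}$ as a count of ``barred words'' of length $d+1$ over $[m+1]$ whose underlying permutation lies in $S_{d+1}^l$, whose final entry is fixed by $\mu$, and whose bars sit at the non-$l$-descents. Matching these two models is the substance of the argument.

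For $l=1$ this matching is precisely \cite[Thm.~1]{BW}: the standard last-letter bijection between ordered set partitions and permutations, together with the interpretation of the $(x-1)$-expansion as inclusion--exclusion over the coloring of unused elements, identifies ordered set partitions of a subset of $[d]$ graded by $\#B_0+r$ with permutations of $[d+1]$ graded by descents and with a prescribed final value. The hard part, and what I expect to be the main obstacle, is the general-$l$ case: condition (P2), $\#(B_0\cup B_1)\ge l+1$, has to be carried to the defining condition of $S_{d+1}^l$, and the block $B_0\cup B_1$ must map to the initial segment $\sigma(1),\dots,\sigma(l+1)$, so that case~(1) in the Definition of $D^l(\sigma)$ records exactly the comparisons inside that segment while case~(2) governs the remaining separators; one must also check that $\#B_0+r$ is transported to $\des^l(\sigma)$ with the correct offset. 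The cleanest route is probably to dispose of $l=1$ via \cite{BW} and then descend along the chain $S_{d+1}^l\subset S_{d+1}^{l-1}$, exploiting that $\sd^{l-1}$ differs from $\sd^l$ only by subdividing the faces of dimension exactly $l-1$ --- combinatorially, splitting one further part off $B_0\cup B_1$, matched on the permutation side with refining case~(1) of $D^{l-1}$ out of case~(1) of $D^l$ --- so that the bijection and both statistics propagate from $l$ to $l-1$. Alternatively, one could try to establish $(\star)$ directly by exhibiting a common recursion (in $d$, or in $l$) satisfied by both sides.
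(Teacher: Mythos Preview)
Your reduction to the polynomial identity $(\star)$ is exactly the first block of the paper's argument (it just keeps the coefficient of $x^{d-\nu}$ rather than the whole polynomial): after substituting Lemma~\ref{le:fvector} and the $f$-to-$h$ change of basis, the paper also arrives at the task of showing that
\[
\sum_{k=0}^d\sum_{i=0}^j(-1)^{j-i}\binom{d-i}{j-i}\binom{d-\mu}{d-k}R(k,i,l)
\]
equals the number of $\sigma\in S_{d+1}^l$ with $\des^l(\sigma)=j$ and $\sigma(d+1)=d+1-\mu$, which is precisely $(\star)$ read off coefficientwise.

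Where you diverge is in the proof of $(\star)$. The paper does \emph{not} build a bijection at the level of barred words; instead it fixes a $k$-subset $S\subseteq[d]$, explicitly counts $\#\{\sigma\in S_{d+1}^l: D^l(\sigma)\subseteq S,\ \sigma(d+1)=d+1-\mu\}$ by a direct multinomial computation (splitting the count into the ``tail'' $\sigma(s_{p+1}+1),\dots,\sigma(d+1)$ and the placement of $\sigma(l+1)$ among the remaining elements), recognizes the resulting sum as $\sum_j\binom{d-\mu}{d-j}R(j,k,l)$, and finally applies M\"obius inversion over the Boolean lattice to pass from $D^l(\sigma)\subseteq S$ to $D^l(\sigma)=S$. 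Your barred-word/$P$-partition interpretation at $x=m+1$ is the generating-function packaging of that same intermediate identity (multiply the paper's ``$\subseteq S$'' count by $m^{d-\#S}$ and sum over $S$), so the two routes are closely related; the paper's is fully executed and elementary, while yours remains a plan whose ``hard part'' --- the bijection transporting $\#B_0+r$ to $\des^l$ and (P2) to membership in $S_{d+1}^l$ --- is only sketched.

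One concrete issue in your sketch: your proposed induction on $l$ is oriented inconsistently. You take $l=1$ as the base case via \cite{BW}, but then describe the step as ``propagate from $l$ to $l-1$'' along $S_{d+1}^l\subset S_{d+1}^{l-1}$; that moves \emph{toward} $l=1$, not away from it. If the base is $l=1$, the step must go $l-1\Rightarrow l$ (restrict the known bijection to the subset $S_{d+1}^l$ and check that condition~(P2) and case~(1) of $D^l$ match up), or else the base should be at $l=d-1$ where the count is explicit. The paper sidesteps this by handling all $l$ uniformly with the direct count above.
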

    \begin{proof}
      For all $0\leq j\leq d$, we have:
      \begin{eqnarray}
      \nonumber   h_j^{\sd^l(\Delta)} &=& \sum_{i=0}^{j}{d-i \choose j-i}(-1)^{j-i}f_{i-1}^{\sd^l(\Delta)} \\
      \nonumber  &=& \sum_{i=0}^j {d-i \choose j-i} (-1)^{j-i} \sum_{k=0}^d f_{k-1}^{\Delta}R(k,i,l) \\
      \nonumber  &=& \sum_{i=0}^j\sum_{k=0}^d {d-i \choose j-i} (-1)^{j-i}R(k,i,l)\sum_{\mu=0}^k {d-\mu \choose d-k}h_{\mu}^{\Delta}\\
      \nonumber  &=& \sum_{\mu=0}^d{(}\sum_{k=0}^d\sum_{i=0}^j (-1)^{j-i}{d-i \choose j-i}{d-\mu \choose d-k}R(k,i,l){)}h_{\mu}^{\Delta}.
      \end{eqnarray}
      Fix a permutation $\sigma \in S_d^l$ and let
      $\D^l(\sigma) =\{s_1,\ldots,s_k\}\subseteq [d]$
      with $s_1<\cdots < s_k$ its descent set.
      We define $p_{\sigma}$ be $0$ if there is no $s_{p_{\sigma}}\in [l]$
      for which $\sigma(s_{p_{\sigma}}) > \sigma(l+1)$ and to be the maximal
      number $p_\sigma \in [k]$ for which $s_{p_{\sigma}}\in [l]$
      and $\sigma(s_{p_{\sigma}}) > \sigma(l+1)$.
      Thus by definition of an $l$-descent any set $S$ that can arise as the descent
      set of a permutation $\sigma \in S_d^l$ is of the form
      $S = \{1,\ldots, p, s_{p+1}, \ldots, s_k\}$ for
      some $0 \leq p_\sigma < l \leq s_{p}+1$.
      We want to count
      $$\left\{\sigma \in S_{d+1}^l\,\,\,\,\ \Big| \,\,\,\,\,\,\
      \begin{array}{ll}
         \D^l(\sigma)\subseteq S,p_{\sigma}=p  \\
         \sigma(d+1)=d+1-\mu
       \end{array}
      \right\}.
      $$
      First, we count the possibilities to choose
      $\sigma(s_{p+1}+1) , \ldots, \sigma(d+1)$. In this range a descent is just a
      usual descent and we have ${s_k \choose s_{p+1},s_{p+2}-s_{p+1},
      \ldots,s_k-s_{k-1}}{d-\mu \choose d-t_k}$ possibilities.
      Having done this we are left with $s_{p+1}$ elements that we have to
      arrange accordingly. Let $U$ be this set of elements.
      Let us consider the options for $\sigma(l+1)$. In order to create no
      descent in the range of $l+1$ and $s_{p+1}-1$ there have
      to be at least $s_{p+1} - (l+1)$ elements larger than $\sigma(l+1)$.
      Hence there can be at most $l$ elements smaller than $\sigma(l+1)$.
      In order to create no descent between $\sigma(i)$ and $\sigma(l+1)$ for
      some $p < i < l+1$ there must be at least $l-p$ elements
      smaller than $\sigma(l+1)$. Hence $\sigma(l+1)$ can only range from
      the $(l+1-p)$\textsuperscript{th} element of $U$ to the
      $(l+1)$\textsuperscript{st} element of $U$.
      Let $l+1-p \leq i \leq l+1$ and assume $\sigma(l+1)$ is the
      $i$\textsuperscript{th}
      element of $U$. Then in order to fix the permutation $\sigma$ we have
      to fix $s_{p+1} -(l+1)$ element larger than $\sigma(l+1)$ that will become the
      images of $\sigma(l+2),\ldots, \sigma(s_{p+1})$ in
      increasing order. For that we have ${s_{p+1} -i \choose s_{p+1} - (l+1)}$
      possibilities.
      Using $l+1 \leq s_{p+1}$ we obtain:

      $$\# \left\{\sigma \in S_{d+1}^l\,\,\,\,\ \Big| \,\,\,\,\,\,\
      \begin{array}{ll}
         \D^l(\sigma)\subseteq S,p_{\sigma}=p  \\
         \sigma(d+1)=d+1-\mu
       \end{array}
      \right\}
      $$
      $$=\sum_{i=l+1-p}^{l+1}{s_{p+1}-i \choose s_{p+1}-(l+1)}
                 {s_k \choose s_{p+1},s_{p+2}-s_{p+1},\ldots,s_k-s_{k-1}}
                 {d-\mu \choose d-t_k}$$
      but,
      $$\nonumber \sum_{i=l+1-p}^{l+1}{s_{p+1}-i \choose s_{p+1}-(l+1)}$$
      \begin{eqnarray}
      \nonumber &=& {s_{p+1}-l-1+p \choose s_{p+1}-l-1}+
         {s_{p+1}-l-2+p \choose s_{p+1}-l-1}+\cdots+
         {s_{p+1}-l-1 \choose s_{p+1}-l-1}\\
      \nonumber &=& {s_{p+1}-l-1+p+1 \choose s_{p+1}-l-1+1}=
                    {s_{p+1}-l+p \choose s_{p+1}-l}
      \end{eqnarray}
      so,
      $$\# \left\{\sigma \in S_{d+1}^l\,\,\,\,\ \Big| \,\,\,\,\,\,\
      \begin{array}{ll}
         \D^l(\sigma)\subseteq S,p_{\sigma}=p  \\
         \sigma(d+1)=d+1-\mu
       \end{array}
      \right\}
      $$
      $$={s_{p+1}-l+p \choose s_{p+1}-l}
         {s_k \choose s_{p+1},s_{p+2}-s_{p+1},\ldots,s_k-s_{k-1}}
         {d-\mu \choose s-s_k}$$
      therefore,
      $$\sum_{\{1\leq s_1<\cdots < s_k\leq d\}}\sum_{p=0}^l
          \# \left\{\sigma \in S_{d+1}^l\,\,\,\,\ \Big| \,\,\,\,\,\,\
          \begin{array}{ll}
            \D^l(\sigma)\subseteq S,p_{\sigma}=p \\
            \sigma(d+1)=d+1-\mu
          \end{array}
        \right\}
      $$
      $$=\sum_{1\leq s_1< \cdots <s_k\leq d}\sum_{p=0}^l
                 {{s_{p+1}-l+p \choose s_{p+1}-l}
                  {s_k \choose s_{p+1},s_{p+2}-s_{p+1},\ldots,s_k-s_{k-1}}
                  {d-\mu \choose d-s_k}}$$
      $$=\sum_{j=k}^d{d-\mu \choose d-j}\sum_{1\leq s_1< \cdots <s_{k-1}\leq j-1}
            \sum_{p=0}^l {s_{p+1}-l+p \choose s_{p+1}-l}
                         {j \choose s_{p+1},s_{p+2}-s_{p+1},\ldots,j-s_{k-1}}
      $$

      For a fixed sequence $s_1 < \cdots < s_k = j$ for which
      $s_1=1, \ldots, s_p = p$ and $s_{p+1} \geq l+1$ we set
      $B_0=[s_p]$ for $p \geq 1$ and $B_0=\emptyset$ for $p=0$. Also,
      we set
      $$B_{\omega}=[s_{p+\omega-1}+1,s_{p+\omega}]\text{ for }
                        1\leq \omega \leq k-p.$$
      Then $B_0 \cup \cdots \cup B_r = [j]$ which implies (P1).
      If $r \geq 1$ then $\# (B_0 \cup B_1)=\# B_0 + \# B_1 \geq p+ (l+1-p)=l+1$
      implying (P2). Finally $\# B_0 + r =p+(k-p)= k$ shows (P3).

      For notational convenience we set $s_0=0$.
      So for $r=k-p$,
      \begin{gather}
         \label{eq:part}
         \sum_{1\leq s_1< \cdots <s_{k-1}\leq j-1}\sum_{p=0}^l
                {s_{p+1}-l+p \choose s_{p+1}-l}
                {j \choose s_{p+1},s_{p+2}-s_{p+1},\ldots,j-s_{k-1}}
      \end{gather}
      counts the number of $B=|B_0|B_1|...|B_{r}|$ of $[j]$ satisfying (P1) - (P3).
      Therefore \eqref{eq:part} equals $R(j,k,l)$.

      Using this result we have,
      $$
        \sum_{S\subseteq [d],\# S =k}
           \# \left\{\sigma \in S_{d+1}^l\,\,\,\,\ \Big| \,\,\,\,\,\,\
        \begin{array}{ll}
          \D^l(\sigma)\subseteq S,  \\
          \sigma(d+1)=d+1-\mu
        \end{array}
        \right\}
      $$
      $$=\sum_{j=k}^d {d-\mu \choose d-j} R(j,k,l)$$
      therefore,
      $$\sum_{k=0}^d\sum_{i=0}^j (-1)^{j-i}
            {d-i \choose j-i}
            {d-\mu \choose d-k} R(k,i,l) $$
      \begin{eqnarray}
      \nonumber &=& \sum_{i=0}^j
           (-1)^{j-i}{d-i \choose j-i}\sum_{\{S\subseteq [d],\# S=i\}}
              \# \left\{\sigma \in S_{d+1}^l\,\,\,\,\ \Big| \,\,\,\,\,\,\
      \begin{array}{ll}
         \D^l(\sigma)\subseteq S,  \\
         \sigma(d+1)=d+1-\mu
      \end{array}
      \right\} \\
      \nonumber &=& \sum_{\{S\subseteq [d],\# S\leq j\}}
         (-1)^{j-\#S}{d-\#S \choose j-\#S}
         \# \left\{\sigma \in S_{d+1}^l\,\,\,\,\ \Big| \,\,\,\,\,\,\
         \begin{array}{ll}
           \D^l(\sigma)\subseteq S,  \\
           \sigma(d+1)=d+1-\mu
         \end{array}
      \right\}  \\
      \nonumber &=& \sum_{\{S\subseteq [d],\# S\leq j\}}
        (-1)^{j-\#S}{d-\#S \choose j-\#S}\sum_{T\subseteq S}
          \# \left\{\sigma \in S_{d+1}^l\,\,\,\,\ \Big| \,\,\,\,\,\,\
        \begin{array}{ll}
          \D^l(\sigma)= T,  \\
          \sigma(d+1)=d+1-\mu
        \end{array}
      \right\} \\
      \nonumber &=& \sum_{\{T\subseteq [d],\# T\leq j\}}\# \left\{\sigma \in S_{d+1}^l\,\,\,\,\ \Big| \,\,\,\,\,\,\
       \begin{array}{ll}
         \D^l(\sigma)= T,  \\
         \sigma(d+1)=d+1-\mu
       \end{array}
      \right\} \sum_{\{S\supseteq T, \#S\leq j\}}(-1)^{j-\#S}{d-\#S \choose j-\#S}\\
      \nonumber &=& \sum_{\{T\subseteq [d],\# T\leq j\}}\# \left\{\sigma \in S_{d+1}^l\,\,\,\,\ \Big| \,\,\,\,\,\,\
       \begin{array}{ll}
         \D^l(\sigma)= T,  \\
         \sigma(d+1)=d+1-\mu
       \end{array}
      \right\}\sum_{i=\# T}^j(-1)^{j-i}{d-i \choose j-i}{d-\#T \choose i-\#T}.
      \end{eqnarray}
      But
      \begin{eqnarray}
      \nonumber \sum_{i=\# T}^j(-1)^{j-i}{d-i \choose j-i}{d-\#T \choose i-\#T} &=& {d-\#T \choose i-\#T}\sum_{i=\# T}^j(-1)^{j-i}{j-\#T \choose i-\#T} \\
      \nonumber   &=& \delta_{j,\#T}.
      \end{eqnarray}
      Hence\\
      $$\sum_{k=0}^d\sum_{i=0}^j (-1)^{j-i}{d-i \choose j-i}{d-\mu \choose d-k} R(k,i,l) $$
      \begin{eqnarray}
      \nonumber  &=& \sum_{\{T\subseteq [d],\# T= j\}}\# \left\{\sigma \in S_{d+1}^l\,\,\,\,\ \Big| \,\,\,\,\,\,\
      \begin{array}{ll}
         \D^l(\sigma)= T,  \\
         \sigma(d+1)=d+1-\mu
       \end{array}
      \right\} \\
      \nonumber &=& \# \left\{\sigma \in S_{d+1}^l\,\,\,\,\ \Big| \,\,\,\,\,\,\
       \begin{array}{ll}
         \des^l(\sigma)= j,  \\
         \sigma(d+1)=d+1-\mu
       \end{array}
      \right\}.
      \end{eqnarray}
      This completes the proof.
    \end{proof}

    We note that since for a $(d-1)$-dimensional simplicial complex $\Delta$ and $l \leq d-1$ the
    subdivision operation $\sd^l(\bullet)$ is non-trivial in top-dimension it follows from Theorem
    5.5. from \cite{D} that iterated application of $\sd^l(\bullet)$ will lead to a convergence
    phenomenon for the $f$-vector. More precisely, for a $(d-1)$-dimensional simplicial complex
    $\Delta$, set $\Delta^{(n)} := \underbrace{\sd^l(\cdots \sd^l}_n (\Delta)$ and
    $f^{(n)} (t) = \sum_{i=0}^d f_{i-1}^{\Delta^{(n)}}t^{d-i}$ then for $n \rightarrow \infty$
    one root of $f^{(n)}(t)$ will go to $-\infty$ and the others converge to complex numbers independent
    of $\Delta$. This phenomenon was fist observed in \cite[Thm. 4.2]{BW} for the special case of classical
    barycentric subdivision $\sd^1(\bullet)$. In addition, in \cite[Thm. 3.1]{BW} it is shown that for $l=1$
    the polynomial $f^{(1)}(t)$ has only real roots. Simple examples show that this is not the case
    for general $l$.

\section{The Transformation Matrices}
  \noindent For a $(d-1)$-dimensional simplicial complex $\Delta$ we
  denote by $\mathfrak{H}_{d-1}  = {(h_{ij}^{(d-1)})}_{0\leq i,j \leq d}
  \in \RR^{(d+1) \times (d+1)}$ the matrix of the
  linear transformation that sends the $h$-vector of $\Delta$ to the $h$-vector
  of $\sd(\Delta)$ and $\mathfrak{H}^l_{d-1} = {(h_{ij}^{(d-1,l)})}_{0\leq i,j \leq d}
  \in \RR^{(d+1) \times (d+1)}$
  the matrix of the transformation of the $h$-vector of $\Delta$ to the
  $h$-vector of $\sd^l(\Delta)$. Thus $\mathfrak{H}^1_{d-1} = \mathfrak{H}_{d-1}$.
  By \cite[Thm. 1]{BW} we know $h_{ij}^{(d-1)}=A(d+1,i,j+1)$
  and more generally by Theorem \ref{thm:main} we know $h_{ij}^{(d-1,l)} = A(d+1,i,d+1-j,l)$.

  As an illustration we present the matrices $\mathfrak{H}_d^l$ for
  $d=4$ and $l=3$ and $l=2$.

  \smallskip

  \begin{tabular}{cc}
      \begin{minipage}{0.5\textwidth}
        $\mathfrak{H}_3^3=\left(
          \begin{array}{ccccc}
            1 & 0 & 0 & 0 & 0 \\
            1 & 2 & 1 & 1 & 1 \\
            1 & 1 & 2 & 1 & 1 \\
            1 & 1 & 1 & 2 & 1 \\
            0 & 0 & 0 & 0 & 1 \\
          \end{array}
          \right)
        $
      \end{minipage}
     &
      \begin{minipage}{0.5\textwidth}
        $\mathfrak{H}_3^2=\left(
          \begin{array}{ccccc}
            1 & 0 & 0 & 0 & 0 \\
            5 & 5 & 5 & 2 & 1 \\
            5 & 5 & 6 & 5 & 5 \\
            1 & 2 & 3 & 5 & 5 \\
            0 & 0 & 0 & 0 & 1 \\
          \end{array}
        \right)
       $
      \end{minipage}
    \end{tabular}

  \smallskip

  The following lemma follows immediately from the definition of $A(d+1,i,j+1)$.

  \begin{Lemma}
    \label{sumH}
    The sum of all entries of $\mathfrak{H}^l_{d-1}$ is given by:\\
    $$\sum_{0\leq i,j \leq d}h_{ij}^{(d-1,l)}=\frac{(d+1)!}{l!},$$
    and the sum of all entries of each column is given by:\\
    $$\sum_{0\leq j \leq d}h_{ij}^{(d-1,l)}=\frac{d!}{l!}, \quad 0 \leq i \leq d.$$
  \end{Lemma}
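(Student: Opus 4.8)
The plan is to read off both identities from the combinatorial meaning of the matrix entries supplied by Theorem~\ref{thm:main}. Recall from the discussion preceding the lemma that $h_{ij}^{(d-1,l)} = A(d+1,i,d+1-j,l)$, and that by definition $A(d+1,i,m,l)$ is the number of permutations $\sigma\in S_{d+1}^{l}$ with $\des^{l}(\sigma)=i$ and $\sigma(d+1)=m$. So every entry of $\mathfrak{H}^{l}_{d-1}$ is the cardinality of a subset of $S_{d+1}^{l}$, and I would evaluate the two sums by simply partitioning $S_{d+1}^{l}$ according to these two statistics.

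The one elementary ingredient I would isolate first is the count of the permutation sets involved. For an $m$-element set, a permutation lies in $S_{m}^{l}$ exactly when its first $l$ entries are strictly decreasing; since this is a condition only on the relative order of $l$ prescribed positions, it is met by exactly a $1/l!$ fraction of all $m!$ permutations, so $|S_{m}^{l}| = m!/l!$ (equivalently: choose the $l$-set of values occupying positions $1,\dots,l$, whose order is then forced, and arrange the remaining $m-l$ values freely, giving $\binom{m}{l}(m-l)! = m!/l!$). Now for the total sum: as $(i,j)$ runs over $\{0,\dots,d\}^{2}$, the pair $(i,\,d+1-j)$ runs exactly once over $\{0,\dots,d\}\times[d+1]$; since the $l$-descent set of a $\sigma\in S_{d+1}^{l}$ lies in $[d]$ we have $\des^{l}(\sigma)\in\{0,\dots,d\}$, while $\sigma(d+1)\in[d+1]$, so each $\sigma\in S_{d+1}^{l}$ is counted in exactly one term (the one with $i=\des^{l}(\sigma)$, $j=d+1-\sigma(d+1)$), and hence $\sum_{0\le i,j\le d}h_{ij}^{(d-1,l)} = |S_{d+1}^{l}| = (d+1)!/l!$. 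For a single column, fix $j$ and put $m=d+1-j\in[d+1]$; then $\sum_{0\le i\le d}h_{ij}^{(d-1,l)} = \#\{\sigma\in S_{d+1}^{l}:\sigma(d+1)=m\}$. Because $l\le d-1$, position $d+1$ lies strictly beyond the first $l$ positions, so deleting it yields a bijection with the arrangements of the $d$-element set $[d+1]\setminus\{m\}$ in positions $1,\dots,d$ whose first $l$ entries are strictly decreasing; by the count just made there are $d!/l!$ of these, a number independent of $j$. (As a consistency check, summing over the $d+1$ columns returns $(d+1)\cdot d!/l! = (d+1)!/l!$.)

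I do not anticipate a genuine obstacle: once Theorem~\ref{thm:main} is invoked, the statement is a direct double count. The only points requiring care are the index bookkeeping — keeping straight that the row index $i$ records $\des^{l}(\sigma)$ while the column index $j$ records $\sigma(d+1)=d+1-j$, so that the ranges $0\le i,j\le d$ sweep out every element of $S_{d+1}^{l}$ once and only once — and the observation that the hypothesis $l\le d-1$ is precisely what leaves position $d+1$ unconstrained, which is what makes the refined count $d!/l!$ valid for each prescribed value of $\sigma(d+1)$.
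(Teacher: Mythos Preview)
Your argument is correct and is exactly the unpacking of the paper's one-line remark that the lemma ``follows immediately from the definition of $A(d+1,i,j+1)$'': once $h_{ij}^{(d-1,l)}=A(d+1,i,d+1-j,l)$ is in hand, both sums arise by partitioning $S_{d+1}^{l}$ according to $(\des^{l}(\sigma),\sigma(d+1))$ together with $|S_{m}^{l}|=m!/l!$. You have also silently corrected the index typo in the displayed column-sum formula (it should read $\sum_{0\le i\le d}$ with $j$ fixed, not $\sum_{0\le j\le d}$), consistent with its later use in Corollary~\ref{Co:initialeigenvectors}.
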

%

  The next simple lemma gives an explicit formula for $\mathfrak{H}^{d-1}_{d-1}$ which
  will serve as the induction base for the proof of monotonocity of the $h$-vector
  under partial barycentric subdivision in Corollary \ref{Co:proof}.

  \begin{Lemma}
    \label{le:entries}
    Let $l=d-1$, then the entries of $\mathfrak{H}^{d-1}_{d-1}$ are given by:\\
    $$h_{ij}^{(d-1,d-1)}=\left\{
                          \begin{array}{ll}
                            0, & \hbox{$i=0,j\not=0$ or $i=d,j\not=d$;} \\
                            2, & \hbox{$i=j=1,\ldots,d-1$;} \\
                            1, & \hbox{otherwise.}
                          \end{array}
                        \right.$$ \\

    and hence
    $$\mathfrak{H}^{d-1}_{d-1}=\left(
                                  \begin{array}{cccccc}
                                    1 & 0 & 0 & \cdots & 0 & 0 \\
                                    1 & 2 & 1 & \cdots & 1 & 1\\
                                    1 & 1 & 2 & \cdots & 1 & 1\\
                                    \vdots & \vdots & \vdots &   & \vdots & \vdots \\
                                    1 & 1 & 1 & \cdots & 2 & 1\\
                                    0 & 0 & 0 & \cdots & 0 & 1 \\
                                  \end{array}
                                \right)
      $$
    \end{Lemma}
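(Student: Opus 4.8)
The plan is to read $\mathfrak{H}^{d-1}_{d-1}$ off the permutation description recorded just before the lemma, $h_{ij}^{(d-1,l)}=A(d+1,i,d+1-j,l)$, which for $l=d-1$ is the number of $\sigma\in S_{d+1}^{d-1}$ with $\des^{d-1}(\sigma)=i$ and $\sigma(d+1)=d+1-j$; I would then enumerate these permutations directly, exploiting the rigid shape of $S_{d+1}^{d-1}$.

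First I would record that shape: a permutation $\sigma\in S_{d+1}^{d-1}$ is determined by the ordered pair $(a,b):=(\sigma(d),\sigma(d+1))$ of two distinct elements of $[d+1]$, since the remaining $d-1$ values must fill positions $1,\dots,d-1$ in strictly decreasing order. Next I would express $\des^{d-1}(\sigma)$ through $(a,b)$. By the definition of the $l$-descent set with $l=d-1$ in the ambient group $S_{d+1}$, a position $i\in[d-1]$ is an $l$-descent exactly when $\sigma(i)>\sigma(d)=a$, while position $d$ is an $l$-descent exactly when $a=\sigma(d)>\sigma(d+1)=b$. As $\sigma(1)>\dots>\sigma(d-1)$ is the decreasing arrangement of $[d+1]\setminus\{a,b\}$, the number of $i\in[d-1]$ with $\sigma(i)>a$ equals $\#\{t\in[d+1]\setminus\{a,b\}:t>a\}=(d+1-a)-[b>a]$ (the $d+1-a$ elements of $[d+1]$ above $a$, minus $b$ when $b>a$), where $[P]$ is $1$ if $P$ holds and $0$ otherwise. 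Hence
$$\des^{d-1}(\sigma)=(d+1-a)-[b>a]+[a>b]=\begin{cases} d+2-a,& a>b,\\ d-a,& a<b.\end{cases}$$

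Then I would fix $j$, so $b=d+1-j\in[d+1]$, together with a target value $i=\des^{d-1}(\sigma)$, and count the admissible $a$. If $a>b$, then $d+2-a=i$ forces $a=d+2-i$, and this lies in $[d+1]$ with $a>b=d+1-j$ exactly when $1\le i\le d$ and $i\le j$. If $a<b$, then $d-a=i$ forces $a=d-i$, and this lies in $[d+1]\setminus\{b\}$ with $a<b$ exactly when $0\le i\le d-1$ and $j\le i$. Thus $h_{ij}^{(d-1,d-1)}$ is the number of these two alternatives that occur: for $i=0$ only the second can occur, and only if $j=0$; for $i=d$ only the first, and only if $j=d$; for $1\le i\le d-1$ the first contributes $1$ iff $j\ge i$ and the second iff $j\le i$, so the total is $2$ when $i=j$ and $1$ otherwise. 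This is exactly the asserted formula for $h_{ij}^{(d-1,d-1)}$, and arranging these entries into a $(d+1)\times(d+1)$ matrix gives the displayed $\mathfrak{H}^{d-1}_{d-1}$.

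The argument is elementary counting; the only step needing care is the last one — verifying that the candidate values $a=d+2-i$ and $a=d-i$ really lie in $[d+1]\setminus\{b\}$ and on the prescribed side of $b$, and handling the boundary indices $i\in\{0,d\}$, where one alternative disappears and the diagonal entry drops from $2$ to $1$. A quick check against the matrix $\mathfrak{H}_3^3$ displayed above confirms the result.
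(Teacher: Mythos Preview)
Your proof is correct and follows essentially the same approach as the paper's: both fix $\sigma(d+1)=d+1-j$, observe that the remaining permutation is determined by $\sigma(d)$, and compute $\des^{d-1}(\sigma)$ as the number of values in positions $1,\dots,d-1$ exceeding $\sigma(d)$ plus the indicator of a descent at position $d$. The only difference is cosmetic: the paper indexes the $d$ candidates for $\sigma(d)$ in descending order and reads off the descent count from the index, whereas you parametrize directly by $a=\sigma(d)$ and obtain the closed formula $\des^{d-1}(\sigma)=(d+1-a)-[b>a]+[a>b]$; the resulting case split is identical.
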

    \begin{proof}
      We will prove it by describing the entries of an arbitrary column. Let $C_j=(A(d+1,i,d+1-j,d-1))_{0\leq i\leq d} \in \RR^{(d+1) \times 1}$ be the
      $j^{\text{th}}$ column of $\mathfrak{H}^{d-1}_{d-1}$. Then by definition the entries of $C_j$ count the permutations $\sigma\in S_{d+1}^{d-1}$
      such that $\sigma(d+1)=d+1-j$ according to their number of $l$-descents. By fixing the last element, we are left with $d$ permutations
      $\{\sigma_0,\sigma_1,\ldots,\sigma_{d-1}\}\subset S_{d+1}^{d-1}$, and we arrange them in such a way that
      $\{\sigma_0(d),\sigma_1(d),\ldots,\sigma_{d-1}(d)\}$ are in descending order. We count the
      number of $l$-descents in the following way:

      The number of $l$-descents of $\sigma_i$ in the first $d-1$ positions is $i$ by the way we have arranged $\sigma_0, \ldots, \sigma_d$.
      Moreover since $\sigma_i(d+1)=d+1-j$ there is a descent in position $d$ if and only if $i < j$.
      Therefore, the number of $l$-descents is $i+1$ if $i< j$ and $i$ if $i\geq j$. Now a simple count implies the assertion.
    \end{proof}

    The examples above and the preceding lemma suggest some relations among the entries of $\mathfrak{H}^{l}_{d-1}$ that we verify in the next lemmas.

    \begin{Lemma}
      For $0\leq i,j\leq d$, $$A(d+1,i,d+1-j,l)=A(d+1,d-i,j+1,l).$$
    \end{Lemma}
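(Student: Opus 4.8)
The plan is to prove the identity bijectively. Recall that, as used in the proof of Theorem~\ref{thm:main}, $A(d+1,i,m,l)$ counts the permutations $\sigma\in S_{d+1}^l$ with $\des^l(\sigma)=i$ and $\sigma(d+1)=m$. Since $j+1=d+2-(d+1-j)$, substituting $m=d+1-j$ shows that the asserted equality $A(d+1,i,d+1-j,l)=A(d+1,d-i,j+1,l)$ is equivalent to the existence of a bijection $\phi\colon S_{d+1}^l\to S_{d+1}^l$ with $\des^l(\phi(\sigma))=d-\des^l(\sigma)$ and $\phi(\sigma)(d+1)=d+2-\sigma(d+1)$. So the whole proof reduces to constructing and analysing such a $\phi$.

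First I would set $\phi=r\circ c$, where $c$ complements values, $c(\sigma)(m)=d+2-\sigma(m)$, and $r$ reverses the first $l$ positions, $r(\pi)(m)=\pi(l+1-m)$ for $1\le m\le l$ and $r(\pi)(m)=\pi(m)$ for $l+1\le m\le d+1$. Since $\sigma(1)>\cdots>\sigma(l)$ for $\sigma\in S_{d+1}^l$, applying $c$ turns positions $1,\dots,l$ into an increasing run, and the subsequent reversal $r$ turns them back into a decreasing run; hence $\phi(\sigma)\in S_{d+1}^l$. As $c$ acts on values and $r$ on positions, the two commute, and each is an involution, so $\phi$ is an involution on $S_{d+1}^l$, in particular a bijection. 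The condition on the last entry is immediate, because $r$ does not move position $d+1$: $\phi(\sigma)(d+1)=c(\sigma)(d+1)=d+2-\sigma(d+1)$.

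The remaining and most delicate step is to verify $\des^l(\phi(\sigma))=d-\des^l(\sigma)$; care is needed because, unlike an ordinary descent, an $l$-descent at a position $i\le l$ compares $\sigma(i)$ with $\sigma(l+1)$ rather than with a neighbour, and $r$ permutes exactly the positions $1,\dots,l$. Writing $\tau=\phi(\sigma)$, I would split $D^l$ into its intersection with $[l]$ and its intersection with $\{l+1,\dots,d\}$. On $\{l+1,\dots,d\}$ the permutation $\tau$ has the same positions as $c(\sigma)$ with complemented values, so an ordinary descent of $\sigma$ there becomes an ascent of $\tau$ and vice versa; hence $\#(D^l(\tau)\cap\{l+1,\dots,d\})=(d-l)-\#(D^l(\sigma)\cap\{l+1,\dots,d\})$. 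For the intersection with $[l]$, note that $\tau(i)=d+2-\sigma(l+1-i)$ for $1\le i\le l$ while $\tau(l+1)=d+2-\sigma(l+1)$, so $\tau(i)>\tau(l+1)$ if and only if $\sigma(l+1-i)<\sigma(l+1)$; since $l+1-i$ ranges over $[l]$ together with $i$, this gives $\#(D^l(\tau)\cap[l])=\#\{i\in[l]:\sigma(i)<\sigma(l+1)\}=l-\#(D^l(\sigma)\cap[l])$. Adding the two contributions yields $\des^l(\tau)=(d-l)+l-\des^l(\sigma)=d-\des^l(\sigma)$, which is exactly what was needed, and the lemma follows.
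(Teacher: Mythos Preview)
Your proof is correct and coincides with the paper's approach: the paper defines the same map
\[
\varphi(\sigma)=(d+2-\sigma(l),\ldots,d+2-\sigma(1),\,d+2-\sigma(l+1),\ldots,d+2-\sigma(d+1)),
\]
which is exactly your $\phi=r\circ c$, and verifies the descent count by the same case split into positions in $[l]$ and positions in $\{l+1,\dots,d\}$. Your presentation is slightly tidier in two places: you observe that $\phi$ is an involution (the paper merely asserts bijectivity), and you make explicit that in the $[l]$ part it is position $l+1-i$, not $i$, whose non-descent corresponds to an $l$-descent at $i$ in $\phi(\sigma)$, whereas the paper's wording glosses over this reindexing.
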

    \begin{proof}
      Let us denote by $S_{d+1}^l(i,d+1-j)$ the set of permutations $\sigma \in S_{d+1}^l$
      such that $\des^l(\sigma)=i$ and $\sigma(d+1)=d+1-j$. Thus $A(d+1,i,d+1-j,l)=\# S_{d+1}^l(i,d+1-j)$.
      To complete the proof it is enough to provide a bijection between
      $S_{d+1}^l(i,d+1-j)$ and $S_{d+1}^l(d-i,j+1)$.
      Let $$\varphi : S_{d+1}^l(i,d+1-j) \rightarrow S_{d+1}^l(d-i,j+1)$$
      be the map that sends
      $\sigma =(\sigma(1),\ldots,\sigma(d+1)) \in S_{d+1}^l(i,d+1-j)$ to
      $$\varphi(\sigma):=(d+2-\sigma(l),\ldots,d+2-\sigma(1),d+2-\sigma(l+1),
          \ldots,d+2-\sigma(d+1)).$$

      Since $\sigma(1),\ldots,\sigma(l)$ are in descending order, we have
      that $d+2-\sigma(l),\ldots,d+2-\sigma(1)$ are also in descending order
      and hence $\varphi(\sigma)\in S_{d+1}^l$.
      By definition $\phi(\sigma)(d+1) = d+1 - \sigma(d+1) = d+1-j$.
      Thus to show that $\varphi(\sigma) \in S_{d+1}^l(d-i,j+1)$
      it remains to verify that the number of $l$-descents of
      $\varphi(\sigma)$ is $d-i$.

      We show that $m \in [d]$ is an $l$-descent of $\sigma$ if and only if
      $m$ is not an $l$-descent of $\varphi(\sigma)$.
      If $m \in[l]$ then
      $\sigma(j)>\sigma(l+1)$ implies
      $d+2-\sigma(j)<d+2-\sigma(l+1)$ and
      $\sigma(j)<\sigma(l+1)$ implies
      $d+2-\sigma(j)>d+2-\sigma(l+1)$.
      Analogously, if $m\in [d]\setminus [l]$
      then $\sigma(m)>\sigma(m+1)$ implies $d+2-\sigma(m)<d+2-\sigma(m+1)$
      and
      $\sigma(m)<\sigma(m+1)$ implies $d+2-\sigma(m)>d+2-\sigma(m+1)$.

      Therefore, the number of $l$-descents of $\varphi(\sigma)$ is $d-i$.
      This completes the proof since $\varphi$ is clearly a bijection.
    \end{proof}

    \begin{Proposition}
      \label{pr:ineq}
      For $0\leq i,j \leq d$,
      \begin{eqnarray}
        \label{eq:ineq}
        A(d+1,i,d+1-j,l+1) & \leq & A(d+1,i,d+1-j,l).
      \end{eqnarray}
      In addition, for $d\geq 4,$ $0\leq j\leq d$ and $2\leq i \leq d-2$, inequality
      \eqref{eq:ineq} is strict.
    \end{Proposition}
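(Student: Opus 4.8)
The plan is to prove the inequality by constructing an explicit injection from the permutation class counted by $A(d+1,i,d+1-j,l+1)$ into the one counted by $A(d+1,i,d+1-j,l)$, and then to exhibit, for the stated ranges of $d,i,j$, a permutation in the target set that is not in the image, giving strictness. First I would recall that $A(d+1,i,d+1-j,l)$ counts permutations $\sigma\in S_{d+1}^{l}$ with $\sigma(d+1)=d+1-j$ and $\des^{l}(\sigma)=i$, and that by the chain of inclusions $S_{d+1}^{d-1}\subset\cdots\subset S_{d+1}^{1}$ we already have $S_{d+1}^{l+1}\subset S_{d+1}^{l}$ as sets of permutations. So the only issue is the $l$-descent statistic: a $\sigma\in S_{d+1}^{l+1}$ also lies in $S_{d+1}^{l}$, but its $(l+1)$-descent set and its $l$-descent set need not agree, since position $l$ is treated differently (against $\sigma(l+1)$ versus $\sigma(l+2)$) and position $l+1$ itself becomes a potential ordinary descent slot when we pass from level $l+1$ to level $l$.

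The key step is therefore to understand exactly how $\des^{l}$ and $\des^{l+1}$ differ on a fixed $\sigma\in S_{d+1}^{l+1}$. Because $\sigma\in S_{d+1}^{l+1}$ we have $\sigma(1)>\cdots>\sigma(l)>\sigma(l+1)$; in particular positions $1,\dots,l$ are automatically $l$-descents (each $\sigma(i)>\sigma(l+1)$), and they are $(l+1)$-descents iff $\sigma(i)>\sigma(l+2)$. Meanwhile position $l+1$ is not in $[l+1]$ so under level $l+1$ it is an ordinary descent slot (present iff $\sigma(l+1)>\sigma(l+2)$); under level $l$ it is also an ordinary descent slot, present iff $\sigma(l+1)>\sigma(l+2)$ — so position $l+1$ contributes identically. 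Positions $\geq l+2$ are ordinary descents under both levels and contribute identically. Thus the only positions on which $D^{l}$ and $D^{l+1}$ can disagree are $1,\dots,l$, and on those positions $D^{l}(\sigma)$ is forced to contain all of $[l]$ whereas $D^{l+1}(\sigma)$ contains exactly those $i\in[l]$ with $\sigma(i)>\sigma(l+2)$. Hence $\des^{l}(\sigma)=\des^{l+1}(\sigma)+c(\sigma)$, where $c(\sigma)=\#\{i\in[l]:\sigma(l+2)>\sigma(i)>\sigma(l+1)\}\geq 0$. The inclusion $S_{d+1}^{l+1}\hookrightarrow S_{d+1}^{l}$ therefore does \emph{not} directly preserve the descent count, so a naive injection fails; I would instead fix $j$ and $i$ and inject the level-$(l+1)$ fiber $\{\sigma:\des^{l+1}(\sigma)=i\}$ into the level-$l$ fiber by modifying each $\sigma$ on positions $1,\dots,l+2$ so as to absorb the excess $c(\sigma)$, e.g.\ by permuting the values among positions $l+1,l+2,\dots$ appropriately while keeping $\sigma(1)>\cdots>\sigma(l)>\sigma(l+1)$ and keeping $\sigma(d+1)$ fixed. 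Concretely, given $\sigma$ one reorders the block $\{\sigma(l+1),\ldots\}$ to kill the $c(\sigma)$ "extra" $l$-descents created at positions $1,\dots,l$, which is possible because lowering $\sigma(l+1)$ (swapping it down past the offending small values) removes exactly those forced $l$-descents; tracking that this can be done reversibly gives the injection.

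Alternatively — and this is probably cleaner — I would prove the inequality recursively in the spirit of a "last-letter" recursion: conditioning on $\sigma(d+1)$ and on the relative order of the first $l+1$ or $l+2$ entries reduces $A(d+1,i,\cdot,l+1)$ and $A(d+1,i,\cdot,l)$ to sums of smaller $A(d,\cdot,\cdot,\cdot)$'s with binomial coefficients (exactly as in the transformation-matrix arithmetic used in the proof of Theorem~\ref{thm:main}), and one checks the inequality term by term, using induction on $d$ with Lemma~\ref{le:entries} as the base case $l=d-1$ (where both sides are $0$, $1$, or $2$ and the inequality is visible). For the strictness claim when $d\geq 4$, $0\leq j\leq d$, $2\leq i\leq d-2$, I would exhibit one explicit permutation $\tau\in S_{d+1}^{l}$ with $\tau(d+1)=d+1-j$ and $\des^{l}(\tau)=i$ that fails to be an $(l+1)$-permutation-with-$l{+}1$-descent-count-$i$, i.e.\ a $\tau$ realizing a genuine "extra" $l$-descent at a position in $[l]$: choosing the values at positions $l, l+1, l+2$ so that $\tau(l)>\tau(l+1)$ but $\tau(l)<\tau(l+2)$ forces $c>0$, and the conditions $2\leq i\leq d-2$ guarantee we have enough room in the remaining $d-2$ ordinary-descent slots to tune $\des^{l}$ to exactly $i$ for every admissible $j$. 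The main obstacle I anticipate is making the injection in the first approach manifestly well-defined and reversible: the bookkeeping of how reordering positions $l+1,l+2,\dots$ interacts simultaneously with the forced $[l]$-descents, the ordinary descents at $l+1$ and beyond, and the pinned value $\sigma(d+1)=d+1-j$, is delicate, and it is here that one must be careful not to accidentally change $\des^{l}$ by the wrong amount or to collide two distinct $\sigma$'s; the recursive term-by-term approach sidesteps this at the cost of a more computational induction.
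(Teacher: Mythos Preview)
Your overall strategy---an injection $S_{d+1}^{l+1}(i,d+1-j)\hookrightarrow S_{d+1}^l(i,d+1-j)$ for the inequality, then an explicit witness outside the image for strictness---is exactly what the paper does. Your preliminary computation that for $\sigma\in S_{d+1}^{l+1}$ one has $\des^l(\sigma)=\des^{l+1}(\sigma)+c(\sigma)$ with $c(\sigma)=\#\{m\in[l]:\sigma(l+1)<\sigma(m)<\sigma(l+2)\}$ is correct and is the right diagnosis of why the naive inclusion fails.

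Where your proposal falls short of the paper is in the construction of the injection itself. You only gesture at ``lowering $\sigma(l+1)$'' and worry about bookkeeping; the paper's map is much cleaner than you anticipate and requires no reordering of positions $\geq l+2$ at all. Concretely: if $\sigma\in S_{d+1}^{l+1}$ has $p$ of its first $l+1$ values exceeding $\sigma(l+2)$ (so $\sigma(p)>\sigma(l+2)>\sigma(p+1)$), define $\psi(\sigma)$ by removing $\sigma(p+1)$ from the descending block and reinserting it at position $l+1$; positions $l+2,\dots,d+1$ are untouched. One checks directly that $\psi(\sigma)\in S_{d+1}^l$, that it has exactly $p$ $l$-descents in $[l]$ and no descent at position $l+1$, hence $\des^l(\psi(\sigma))=\des^{l+1}(\sigma)$, and that $\psi$ is injective (the inverse sorts positions $1,\dots,l+1$ back into descending order). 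So the ``delicate bookkeeping'' you fear never materializes: the tail of the permutation, including the pinned value $\sigma(d+1)$, is simply left alone.

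Your strictness argument, however, has a genuine gap. You propose to exhibit $\tau\in S_{d+1}^l(i,d+1-j)$ with $\tau(l)>\tau(l+1)$ and $\tau(l)<\tau(l+2)$, forcing $c(\tau)>0$. But this only shows that $\tau$, viewed as an element of $S_{d+1}^{l+1}$, has $\des^{l+1}(\tau)\neq i$; in other words, $\tau$ is not in the \emph{source} set under the inclusion. That says nothing about whether $\tau$ lies in the image $\psi(S_{d+1}^{l+1}(i,d+1-j))$ of the actual injection, which is not the inclusion. What you must do instead is produce $\tau$ whose \emph{formal preimage} under $\psi$ (namely, the permutation obtained by sorting $\tau(1),\dots,\tau(l+1)$ into descending order) fails to lie in $S_{d+1}^{l+1}(i,d+1-j)$. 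The paper does this by placing the value $d+1$ at position $d$ (for $j\neq 0$), choosing the first $l+1$ entries to be the smallest available values, and tuning position $l+1$ so that the $l$-descent count is $i$; the formal preimage then has all of $\sigma'(1),\dots,\sigma'(l+1)$ below $\sigma'(l+2)$, hence only one $(l+1)$-descent (at position $d$), contradicting $i\geq 2$. Your alternative recursive approach might also work, but as stated it is only a plan and would need the recursion written out to be convincing.
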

    \begin{proof}
      For the sake of short notation, within the proof we say descent
      for corresponding $l$-descent and $``\,\,\hat{ }\,\,"$ means that the entry is
      missing in the permutation.
      We define a map $$\psi:S_{d+1}^{l+1}(i,d+1-j)\rightarrow S_{d+1}^l(i,d+1-j)$$
      as follows:

      Let $\sigma\in S_{d+1}^{l+1}(i,d+1-j)$ be a permutation for which $p$ is the
      number of descents in the first $l+1$ positions and $i-p$ descents in the
      remaining positions for some $0 \leq p \leq l$.
      Thus we can write
      $\sigma=(\sigma(1),\ldots,\sigma(p),\ldots,
           \sigma(l+1),\sigma(l+2),\ldots,\sigma(d+1))$
      such that $\sigma(p)> \sigma(l+2)$ and $\sigma(p+1)<\sigma(l+2)$,
      where $\sigma(1),\ldots,\sigma(l+1)$ are in descending order and
      $\sigma(d+1)=d+1-j$.\\

      We define
      $$\psi(\sigma)=(\sigma(1),\ldots,\hat{\sigma}(p+1),\ldots,\sigma(l+1),
      \sigma(p+1),\ldots,\sigma(d));$$ i.e we change the position of
      $\sigma(p+1)$ from $p+1$ to $l+1$. It is easy to see that $\psi(\sigma)$ is a
      permutation for which $p$ is the number of descents in the
      first $l$ position and $i-p$ descents in the remaining position with
      $\psi(\sigma)(d)=d+1-j$. Therefore $\psi(\sigma)\in S_{d+1}^l(i,d+1-j)$.
      Clearly, $\psi$ is injective and hence we have
      $S_{d+1}^{l+1}(i,d+1-j)\subseteq S_{d+1}^l(i,d+1-j)$ which implies
      $A(d+1,i,d+1-j,l+1) \leq A(d+1,i,d+1-j,l)$.

      Now assume $d\geq 4,$ $0\leq j\leq d$ and $2\leq i \leq d-2$.
      For the proof of the strict inequality in \eqref{eq:ineq} it suffices
      to find at least one element
      $\sigma\in S_{d+1}^l(i,d+1-j)$ that does not have a preimage under $\psi$
      in $S_{d+1}^{l+1}(i,d+1-j)$. We consider two cases:

      \smallskip

      \noindent\textsf{Case 1 $(j\not=0)$}: We set $\sigma(d+1)=d+1-j$ and $\sigma(d)=d+1$.
      Now we are left with $d-1$ elements to be arranged with $i-1$ descents.
      Let $\rho_1,\ldots,\rho_{d-1}$ be the remaining elements of $[d+1]$ arranged in
      ascending order; i.e. $\rho_s < \rho_t$ for $s<t$. We have further two
      possibilities:\\

      \begin{itemize}
        \item[(a)] $i-1\leq l$. Then reordering the elements as
          $$\rho_{l+1},\ldots,\hat{\rho}_{l+2-i},\ldots,\rho_1,\rho_{l+2-i},\rho_{l+2}, \ldots,\rho_{d-1}$$ yields the required number of
          $l$-descents. Clearly, the formal preimage of $\sigma$ under $\psi$
          has not belong to $S_{d+1}^{l+1}(i,d+1-j)$ since it has only one descent but $i\geq 2$.
        \item[(b)] $i-1> l$. Then we reorder the elements in the first $l+1$ positions as
          $$\rho_{l+1},\ldots,\rho_2,\rho_1,$$
          which contributes $l$ to the number of descents. The remaining $d-1-(l+1)$ elements can be
          arranged in such a way that they contribute $i-1-l$ to the number of descents. By this setting we cannot have a descent at
          $(l+1)^{\text{th}}$ and $(d-1)^{\text{st}}$ position, so at most we can have $d-2$ descents which coincides with the upper bound for $i$. Again, the formal
          preimage of $\sigma$ under $\psi$ does not belong to $S_{d+1}^{l+1}(i,d+1-j)$ since its number
          of descents are $i-l$.
      \end{itemize}
      \noindent\textsf{Case 2 $(j=0)$}: We set $\sigma(d+1):=d+1$. Thus we are left with the $d$ elements $[d]$
      to be arranged yielding a permutation in $S_{d+1}^l(i,d+1-l)$. The same arrangement as in Case 1 with
      $d$ elements and $i$ descents will give us the required element $\sigma$.
    \end{proof}

    As a consequence of Theorem \ref{thm:main} and Proposition \ref{pr:ineq} we can deduce a result on the growth of the $h$-vector under
    $l$\textsuperscript{th} partial barycentric subdivision.

    \begin{Corollary}
     \label{Co:proof}
      Let $\Delta$ be a $d$-dimensional simplicial complex such that $h_i^\Delta \geq 0$ for all $0 \leq i \leq d$.
      Then $h_i^\Delta \leq h_i^{\sd^l(\Delta)}$ for $0 \leq i \leq d$.
    \end{Corollary}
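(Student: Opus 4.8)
The plan is to read the $h$-vector transformation through the matrices $\mathfrak{H}^l_{d-1}$ and to argue by downward induction on $l$, using Lemma \ref{le:entries} as the base case and Proposition \ref{pr:ineq} for the inductive step. By Theorem \ref{thm:main}, equivalently by the identification $h_{ij}^{(d-1,l)} = A(d+1,i,d+1-j,l)$ of the matrix entries, we have
$$h_i^{\sd^l(\Delta)} = \sum_{j=0}^d A(d+1,i,d+1-j,l)\,h_j^\Delta, \qquad 0 \leq i \leq d,$$
and each coefficient $A(d+1,i,d+1-j,l)$ is non-negative: by definition it is the number of $\sigma \in S_{d+1}^l$ with $\des^l(\sigma)=i$ and $\sigma(d+1)=d+1-j$. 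Since the hypothesis supplies $h_j^\Delta \geq 0$ for all $j$, any entrywise lower bound on the matrix $\mathfrak{H}^l_{d-1}$ turns directly into a lower bound for the numbers $h_i^{\sd^l(\Delta)}$.

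For $l \geq d$ the statement is trivial, since $\sd^l(\Delta) = \Delta$. For the base case $l = d-1$ I would invoke Lemma \ref{le:entries}: its explicit description of $\mathfrak{H}^{d-1}_{d-1}$ shows that every off-diagonal entry is non-negative and every diagonal entry is at least $1$, i.e.\ $\mathfrak{H}^{d-1}_{d-1} \geq I$ entrywise. Combined with $h_j^\Delta \geq 0$ this yields $h_i^{\sd^{d-1}(\Delta)} = \sum_{j} A(d+1,i,d+1-j,d-1)\,h_j^\Delta \geq h_i^\Delta$ for all $i$.

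For the inductive step, fix $l$ with $1 \leq l \leq d-2$ and assume $h_i^{\sd^{l+1}(\Delta)} \geq h_i^\Delta$ for all $i$. By Proposition \ref{pr:ineq} we have $A(d+1,i,d+1-j,l) \geq A(d+1,i,d+1-j,l+1)$ for all $i,j$, that is, $\mathfrak{H}^l_{d-1} \geq \mathfrak{H}^{l+1}_{d-1}$ entrywise. Using $h_j^\Delta \geq 0$ we then obtain
$$h_i^{\sd^l(\Delta)} = \sum_{j=0}^d A(d+1,i,d+1-j,l)\,h_j^\Delta \geq \sum_{j=0}^d A(d+1,i,d+1-j,l+1)\,h_j^\Delta = h_i^{\sd^{l+1}(\Delta)} \geq h_i^\Delta,$$
where the last inequality is the induction hypothesis. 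This closes the induction and proves the claim for all $l$.

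I do not expect a genuine obstacle here: the entire content is carried by Lemma \ref{le:entries} and Proposition \ref{pr:ineq}, and the proof uses only non-negativity of $h^\Delta$ itself, not of the intermediate vectors $h^{\sd^{l'}(\Delta)}$, since every inequality in the chain compares $\sd^{l'}(\Delta)$ directly back to $\Delta$. The only things to watch are the bookkeeping between the two index conventions (the ``$d+1-j$'' inside $A(d+1,\cdot,\cdot,l)$ versus the plain column index $j$ of $\mathfrak{H}^l_{d-1}$) and recording once that all matrix entries are non-negative.
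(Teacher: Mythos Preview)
Your proof is correct and follows essentially the same route as the paper: both invoke Theorem~\ref{thm:main} to write $h_i^{\sd^l(\Delta)}$ as a non-negative combination of the $h_j^\Delta$, use Proposition~\ref{pr:ineq} to reduce the entrywise lower bound on $\mathfrak{H}^l_{d-1}$ to the case $l=d-1$, and then read that case off from Lemma~\ref{le:entries}. The only cosmetic difference is that you package the reduction as a downward induction on $l$, whereas the paper phrases it as a single direct comparison $A(d+1,i,d+1-j,l)\geq A(d+1,i,d+1-j,d-1)$.
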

    \begin{proof}
      It is easy to see that $h_0^{\sd^l(\Delta)}=h_0^{\Delta}$ and $h_d^{\sd^l(\Delta)}=h_d^{\Delta}$, thus we are left with the case $1\leq i \leq d-1.$
      Since by Theorem \ref{thm:main} $h_i^{\sd^l(\Delta)}$ is a non-negative linear combination of the $h_j^\Delta$ it suffices to show that the entries
      of the submatrix ${(h_{ij}^{(d-1,l)})}_{1\leq i\leq d-1,\,\, 0\leq j \leq d}$ are non-zero. Again, by equation \ref{eq:ineq} it is enough to
      consider the case $l=d-1$. By Lemma \ref{le:entries} we complete the proof.
    \end{proof}

    The consequence of the preceding corollary for the smaller class of Cohen-Macaulay simplicial complexes
    also follows from a very general result by Stanley \cite[Theorem 4.10]{St} using the fact that $l$\textsuperscript{th}
    partial barycentric subdivision is a quasi-geometric subdivision. Note that $h_i^\Delta \geq 0$ for
    Cohen-Macaulay simplicial complexes.

    Let $\mathfrak{F}_{d-1}$ be the matrix of the transformation that sends $f$-vector of $\Delta$ to
    $f$-vector of $\sd(\Delta)$. We denote by $\mathfrak{F}_{d-1}^l$ the matrix of the transformation from
    the $f$-vector of $\Delta$ to the $f$-vector of $\sd^l(\Delta)$. Both matrices $\mathfrak{F}_{d-1}$ and
    $\mathfrak{F}^l_{d-1}$ are square matrices of order $d+1$, with $\mathfrak{F}_{d-1}^1=\mathfrak{F}_{d-1}$.
    By Theorem \ref{le:fvector} the entries of $\mathfrak{F}^l_{d-1}={(f_{ij}^{(d-1,l)})}_{0\leq i,j \leq d}$ are given by
    $f_{ij}^{(d-1,l)}=R(j,i,l)$.

    The following lemma shows that the matrices $\mathfrak{F}^l_{d-1}$ and $\mathfrak{H}^l_{d-1}$ are diagonalizable.

    \begin{Proposition}
    \label{le:diagonalizble}
      For $1\leq l\leq d-1$:
      \begin{enumerate}
        \item The matrices $\mathfrak{F}^l_{d-1}$ and $\mathfrak{H}^l_{d-1}$ are
          similar.
        \item The matrices $\mathfrak{F}^l_{d-1}$ and $\mathfrak{H}^l_{d-1}$ are
          diagonalizable with eigenvalue $1$ of multiplicity $l+1$ and
          eigenvalues $\frac{(l+1)!}{l!},\ldots,\frac{d!}{l!}$ of multiplicity $1$.
      \end{enumerate}
    \end{Proposition}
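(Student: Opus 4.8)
The plan is to prove part~(1) first and then to establish part~(2) only for the matrix $\mathfrak{F}^l_{d-1}$; since similar matrices have the same characteristic polynomial and are simultaneously diagonalizable (or not), part~(2) for $\mathfrak{H}^l_{d-1}$ follows at once.

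For part~(1) I would use that the $f$-vector and the $h$-vector of a $(d-1)$-dimensional complex are related through a single invertible linear map $M_{d-1}\in\RR^{(d+1)\times(d+1)}$ (the one defining the $h$-vector from the $f$-vector), which depends neither on $\Delta$ nor on $l$. Then $h^{\sd^l(\Delta)}=M_{d-1}f^{\sd^l(\Delta)}=M_{d-1}\mathfrak{F}^l_{d-1}f^{\Delta}=M_{d-1}\mathfrak{F}^l_{d-1}M_{d-1}^{-1}h^{\Delta}$, and comparing with $h^{\sd^l(\Delta)}=\mathfrak{H}^l_{d-1}h^{\Delta}$ — this being precisely the passage from Lemma~\ref{le:fvector} to Theorem~\ref{thm:main} written in matrix form — gives $\mathfrak{H}^l_{d-1}=M_{d-1}\,\mathfrak{F}^l_{d-1}\,M_{d-1}^{-1}$, so the two matrices are similar.

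For part~(2) I would first record three facts about the restricted Stirling numbers, read off from the chain description established in the proof of Lemma~\ref{le:fvector} — according to which $R(j,i,l)$ counts the chains $A_0\subset A_1\subset\cdots\subset A_r$ of subsets of $[j]$ with $A_r=[j]$ satisfying (C1) $\#A_0\le l$, (C2) $\#A_0+r=i$, and (C3) $\#A_1\ge l+1$ whenever $r\ge1$. Namely: (a) $R(j,i,l)=0$ for $i>j$, since $i=\#A_0+r\le\#A_r=j$ in any such chain; (b) $R(j,i,l)=\delta_{ij}$ for $0\le j\le l$, since $r\ge1$ would force $\#A_1\ge l+1>j$ by (C3), hence $r=0$, $A_0=[j]$, and $i=j$; and (c) $R(j,j,l)=\binom{j}{l}(j-l)!=j!/l!$ for $l\le j\le d$, because the equality $\#A_0+r=j=\#A_r$ forces every inclusion in the chain to add exactly one element, so that $\#A_s=\#A_0+s$; then (C1) together with $\#A_1=\#A_0+1\ge l+1$ (when $r\ge1$) pins down $\#A_0=l$, and the chain is determined by the choice of the $l$-subset $A_0$ of $[j]$ and an ordering of the remaining $j-l$ elements. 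By (b) the first $l+1$ columns of $\mathfrak{F}^l_{d-1}=(R(j,i,l))_{0\le i,j\le d}$ are the unit vectors $e_0,\dots,e_l$, and by (a) $R(j,i,l)=0$ for all $i>j$; hence $\mathfrak{F}^l_{d-1}$ is block upper triangular, $\mathfrak{F}^l_{d-1}=\left(\begin{smallmatrix} I_{l+1} & C\\ 0 & B\end{smallmatrix}\right)$, with $B=(R(j,i,l))_{l+1\le i,j\le d}$ itself upper triangular and, by (c), with diagonal entries $\frac{(l+1)!}{l!},\dots,\frac{d!}{l!}$. Its characteristic polynomial is then $(x-1)^{l+1}\prod_{k=l+1}^{d}\left(x-\frac{k!}{l!}\right)$, which is exactly the asserted spectrum; the values $\frac{k!}{l!}$ for $l+1\le k\le d$ are pairwise distinct and each $\ne1$, since $\frac{k!}{l!}\ge l+1\ge2$.

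Finally I would check diagonalizability, i.e.\ that the eigenvalue $1$ has geometric multiplicity $l+1$ (the other eigenvalues being simple, hence harmless). From the block form, $\mathfrak{F}^l_{d-1}-I_{d+1}=\left(\begin{smallmatrix}0 & C\\ 0 & B-I_{d-l}\end{smallmatrix}\right)$, and $B-I_{d-l}$ is upper triangular with nonzero diagonal $\frac{k!}{l!}-1$, hence invertible; so the last $d-l$ columns of $\mathfrak{F}^l_{d-1}-I_{d+1}$ are linearly independent while the first $l+1$ columns vanish, whence $\operatorname{rank}(\mathfrak{F}^l_{d-1}-I_{d+1})=d-l$ and $\dim\ker(\mathfrak{F}^l_{d-1}-I_{d+1})=l+1$. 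Thus $\mathfrak{F}^l_{d-1}$ is diagonalizable, and by part~(1) so is $\mathfrak{H}^l_{d-1}$, with the same eigenvalues. I expect the main obstacle to be fact (c) — the evaluation $R(j,j,l)=j!/l!$ of the diagonal block $B$ — and, relatedly, making sure one uses the constraint $\#A_0\le l$ from (C1); with the three facts in hand, the rest is routine bookkeeping with the block triangular structure.
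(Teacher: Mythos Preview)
Your proof is correct and follows essentially the same approach as the paper: both establish part~(1) via the invertible $f$-to-$h$ change of basis, then read the eigenvalues of $\mathfrak{F}^l_{d-1}$ off its upper-triangular structure and deduce diagonalizability from the block form. You are more explicit than the paper in verifying the diagonal entries $R(j,j,l)=j!/l!$ (your fact~(c)), which the paper simply asserts, and your rank computation of $\mathfrak{F}^l_{d-1}-I$ is a clean alternative to the paper's claim that the first $l{+}1$ unit vectors are eigenvectors of the transpose---in fact, since by your fact~(b) the first $l{+}1$ \emph{columns} of $\mathfrak{F}^l_{d-1}$ are already $e_0,\ldots,e_l$, these unit vectors are eigenvectors of $\mathfrak{F}^l_{d-1}$ itself, so the detour through the transpose is unnecessary.
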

    \begin{proof}
      \begin{itemize}
        \item Since the transformation sending the $f$-vector of a simplicial complex to the
           $h$-vector of a simplicial complex is an invertible linear transformation, the first
           assertion follows.
        \item Clearly, $\mathfrak{F}^l_{d-1}$ is an upper triangular matrix with
           diagonal entries
           $$\underbrace{1,\ldots,1}_{(l+1)-\mbox{times}},
               \frac{(l+1)!}{l!},\ldots,\frac{d!}{l!}.$$
           Let $(\mathfrak{F}^l_{d-1})^\perp$ be the transpose of $\mathfrak{F}^l_{d-1}$.
           Then for $(\mathfrak{F}^l_{d-1})^\perp$, the first $(l+1)$ unit vectors are
           eigenvectors for the eigenvalue $1$. Also, The eigenvalues
           $\frac{(l+1)!}{l!},\ldots,\frac{d!}{l!}$ are pairwise different. This implies
           that $(\mathfrak{F}^l_{d-1})^\perp$ is diagonalizable.
           But then $\mathfrak{F}^l_{d-1}$ is diagonalizable.
      \end{itemize}
    \end{proof}

    \begin{Corollary}
    \label{Co:initialeigenvectors}
      Let $\nu=(\nu_0,\ldots,\nu_{d})$ be an eigenvector of the matrix
      $\mathfrak{H}^l_{d-1}$ for the eigenvalue $\lambda$ such that
      $\lambda\not=\frac{d!}{l!}$. Then $\sum_{i=0}^d \nu_{i}=0.$
    \end{Corollary}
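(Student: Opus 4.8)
The plan is to read off everything from the column-sum identity in Lemma~\ref{sumH}: each column of $\mathfrak{H}^l_{d-1}$ sums to $\frac{d!}{l!}$. Spelled out, this says that for \emph{every} vector $v=(v_0,\dots,v_d)\in\RR^{d+1}$ one has
$$\sum_{i=0}^d \big(\mathfrak{H}^l_{d-1}\,v\big)_i \;=\; \sum_{j=0}^d\Big(\sum_{i=0}^d h^{(d-1,l)}_{ij}\Big)\,v_j \;=\; \frac{d!}{l!}\sum_{j=0}^d v_j.$$
Equivalently, the all-ones vector is a left eigenvector of $\mathfrak{H}^l_{d-1}$ for the eigenvalue $\frac{d!}{l!}$; phrased in terms of simplicial complexes, combining Lemma~\ref{sumH} with Theorem~\ref{thm:main} gives $\sum_{i=0}^d h_i^{\sd^l(\Delta)}=\frac{d!}{l!}\sum_{i=0}^d h_i^{\Delta}$ for every $(d-1)$-dimensional $\Delta$.

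Granting this, I would finish as follows. Let $\nu=(\nu_0,\dots,\nu_d)$ be an eigenvector of $\mathfrak{H}^l_{d-1}$ for the eigenvalue $\lambda$, and apply the displayed identity with $v=\nu$. Its left-hand side equals $\frac{d!}{l!}\sum_{i=0}^d\nu_i$; on the other hand, since $\mathfrak{H}^l_{d-1}\,\nu=\lambda\,\nu$, that same left-hand side equals $\lambda\sum_{i=0}^d\nu_i$. Hence $\big(\frac{d!}{l!}-\lambda\big)\sum_{i=0}^d\nu_i=0$, and the assumption $\lambda\neq\frac{d!}{l!}$ forces $\sum_{i=0}^d\nu_i=0$.

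I do not expect a genuine obstacle. The only subtlety worth flagging is that the \emph{row} sums of $\mathfrak{H}^l_{d-1}$ are not constant (e.g.\ the row sums of $\mathfrak{H}^3_3$ displayed above are $1,6,6,6,1$), so the all-ones vector is a left but not a right eigenvector; accordingly the computation must be organized around the functional $v\mapsto\sum_i v_i$, i.e.\ summation over the output coordinates, rather than around evaluating $\mathfrak{H}^l_{d-1}$ on the all-ones vector. It is also worth noting that this argument uses only Lemma~\ref{sumH} together with the hypothesis on $\lambda$, and not the diagonalizability of $\mathfrak{H}^l_{d-1}$ established in Proposition~\ref{le:diagonalizble}.
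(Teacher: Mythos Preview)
Your proof is correct and is essentially the same as the paper's: both use Lemma~\ref{sumH} to observe that $(1,\ldots,1)\mathfrak{H}^l_{d-1}=\frac{d!}{l!}(1,\ldots,1)$, then multiply the eigenvector equation on the left by $(1,\ldots,1)$ to obtain $\big(\frac{d!}{l!}-\lambda\big)\sum_i\nu_i=0$. Your added remarks about row versus column sums and the non-use of diagonalizability are accurate but go beyond what the paper records.
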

    \begin{proof}
      Since
      \begin{eqnarray}
        \nonumber  \mathfrak{H}^l_{d-1}\nu  &=& \lambda \nu \\
        \Rightarrow \nonumber   (1,\ldots,1)\mathfrak{H}^l_{d-1}\nu &=& (1,\ldots,1)\lambda \nu
      \end{eqnarray}
      But by Lemma \ref{sumH}, $(1,\ldots,1)\mathfrak{H}^l_{d-1}=\frac{d!}{l!}(1,\ldots,1)$.
      Therefore, either $\lambda=\frac{d!}{l!}$ or
      $\sum_{i=0}^d \nu_{i}=0$. Since $\lambda\not=\frac{d!}{l!}$ we are done.
    \end{proof}

    Next we try to gain a better understanding of the eigenvectors of $\mathfrak{H}^l_{d-1}$.

    \begin{Lemma}
    \label{le:Fd}
      Let $d\geq 2$ and $\nu_1^{(1)},\ldots,\nu_1^{(l+1)},\nu_{l+1},\ldots,\nu_d$ be a basis of
      eigenvectors of the matrix $\mathfrak{F}^l_{d-1}$, where $\nu_1^{(1)},\ldots,\nu_1^{(l+1)}$ are
      eigenvectors for the eigenvalue $1$ and $\nu_{l+1},\ldots,\nu_d$ are eigenvectors for the
      eigenvalues $\{l+1,\ldots,d\}$, respectively. Then
      $(\nu_1^{(1)},0),\ldots,(\nu_1^{(l+1)},0),(\nu_{l+1},0),\ldots,(\nu_d,0)$ are eigenvectors of
      the matrix $\mathfrak{F}^l_d$ for the eigenvalues $\{\underbrace{1,\ldots,1}_{(l+1) times},l+1,\ldots,d\}.$
    \end{Lemma}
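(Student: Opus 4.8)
The plan is to read off the block structure of $\mathfrak{F}^l_d$ and to observe that appending a zero coordinate to an eigenvector of $\mathfrak{F}^l_{d-1}$ keeps it inside an $\mathfrak{F}^l_d$-invariant subspace on which $\mathfrak{F}^l_d$ acts as $\mathfrak{F}^l_{d-1}$. First I would recall that, by Lemma \ref{le:fvector}, the $(i,j)$-entry of $\mathfrak{F}^l_d$ is $R(j,i,l)$, a number that does not depend on the ambient dimension; hence the principal submatrix of $\mathfrak{F}^l_d$ on the rows and columns indexed by $\{0,\dots,d\}$ is literally $\mathfrak{F}^l_{d-1}$. The key observation is the vanishing
$$R(j,i,l)=0\quad\text{whenever } j<i,$$
which I would prove straight from the definition: in any ordered set partition $|B_0|B_1|\cdots|B_r|$ counted by $R(j,i,l)$ the blocks $B_1,\dots,B_r$ are nonempty, so $j=\#B_0+\#B_1+\cdots+\#B_r\ge \#B_0+r=i$. (Geometrically this just says that a simplex on $j$ vertices has no $(i-1)$-dimensional face once $i>j$.) Applying this with $i=d+1$, the last row of $\mathfrak{F}^l_d$, whose entries are $R(j,d+1,l)$ for $0\le j\le d+1$, is zero except possibly in its diagonal slot; thus $\mathfrak{F}^l_d$ is block upper triangular,
$$\mathfrak{F}^l_d=\left(\begin{array}{cc}\mathfrak{F}^l_{d-1}&*\\ 0& R(d+1,d+1,l)\end{array}\right),$$
and the coordinate subspace $W=\{v\in\RR^{d+2}:v_{d+1}=0\}$ is $\mathfrak{F}^l_d$-invariant, with $\mathfrak{F}^l_d$ restricting to $\mathfrak{F}^l_{d-1}$ on $W$ under the identification $W\cong\RR^{d+1}$.

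With this in place the lemma is a one-line check: if $\mathfrak{F}^l_{d-1}\nu=\lambda\nu$ then $\mathfrak{F}^l_d(\nu,0)=(\mathfrak{F}^l_{d-1}\nu,0)=\lambda(\nu,0)$, and $(\nu,0)\ne 0$ because $\nu\ne 0$, so $(\nu,0)$ is an eigenvector of $\mathfrak{F}^l_d$ for the same eigenvalue $\lambda$. I would then apply this to each member of the given eigenbasis $\nu_1^{(1)},\dots,\nu_1^{(l+1)},\nu_{l+1},\dots,\nu_d$ of $\mathfrak{F}^l_{d-1}$. Since the construction preserves eigenvalues, the resulting vectors $(\nu,0)$ are eigenvectors of $\mathfrak{F}^l_d$ carrying exactly the eigenvalues of the corresponding $\nu$'s, namely $1$ with multiplicity $l+1$ (for $(\nu_1^{(1)},0),\dots,(\nu_1^{(l+1)},0)$) and the eigenvalues $l+1,\dots,d$ of Proposition \ref{le:diagonalizble} (for $(\nu_{l+1},0),\dots,(\nu_d,0)$), which is the asserted list.

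I do not expect a genuine obstacle here. The only point that needs care is the vanishing $R(j,i,l)=0$ for $j<i$ (equivalently, that the subdivision never decreases the dimension of the top face), which is exactly what prevents the new zero coordinate from ever being fed back into the first $d+1$ coordinates; everything else is bookkeeping with the block-triangular shape of $\mathfrak{F}^l_d$.
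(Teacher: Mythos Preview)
Your proof is correct and follows essentially the same route as the paper: both rest on the observation that $\mathfrak{F}^l_{d-1}$ is the principal submatrix of $\mathfrak{F}^l_d$ obtained by deleting the last row and column, together with upper triangularity (your explicit verification that $R(j,i,l)=0$ for $j<i$ is precisely what the paper summarizes by saying both matrices are upper triangular). The paper's proof is a terse two-line remark; you have simply unpacked it, and in particular your block upper triangular display makes transparent why appending a zero coordinate preserves the eigenvector property.
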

    \begin{proof}
      Since both $\mathfrak{F}^l_{d-1}$ and $\mathfrak{F}^l_{d}$ are upper triangular and $\mathfrak{F}^l_{d-1}$ is
      obtained by deleting $(d+2)$\textsuperscript{nd} column and row from $\mathfrak{F}^l_{d}$ the assertion follows.
    \end{proof}

    Let $\hat{\mathfrak{H}}^l_{d-1}$ be a matrix obtained be deleting the first and last rows and columns of
    $\mathfrak{H}^l_{d-1}$. Thus $\hat{\mathfrak{H}}^l_{d-1}$ is a $d$ by $d$ square matrix.

    \begin{Lemma}
    \label{le:innermatrixH}
      The matrix $\hat{\mathfrak{H}}^l_{d-1}$ is diagonalizable.
    \end{Lemma}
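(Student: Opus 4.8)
The plan is to identify $\hat{\mathfrak{H}}^l_{d-1}$ with the matrix of the restriction of the diagonalizable operator $\mathfrak{H}^l_{d-1}$ to a suitable invariant subspace, and then invoke the standard fact that the restriction of a diagonalizable operator to an invariant subspace is again diagonalizable.

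First I would record the shape of the extremal rows and columns of $\mathfrak{H}^l_{d-1}$. Since $h^{(d-1,l)}_{0j}=A(d+1,0,d+1-j,l)$ counts the $\sigma\in S^l_{d+1}$ with $\des^l(\sigma)=0$, such a $\sigma$ must satisfy $\sigma(l+1)<\sigma(l+2)<\cdots<\sigma(d+1)$ together with $\sigma(i)<\sigma(l+1)$ for all $i\in[l]$; this forces $\sigma(l+1)=l+1$ and $\sigma(d+1)=d+1$, so $h^{(d-1,l)}_{0j}=\delta_{0j}$. Symmetrically, $h^{(d-1,l)}_{dj}=A(d+1,d,d+1-j,l)$ counts the $\sigma\in S^l_{d+1}$ with $\des^l(\sigma)=d$, which forces $\sigma(1)>\sigma(2)>\cdots>\sigma(d+1)$, hence $\sigma(d+1)=1$, so $h^{(d-1,l)}_{dj}=\delta_{dj}$. (For $l=d-1$ this is already contained in Lemma \ref{le:entries}.) In particular, for $1\le j\le d-1$ the $j$-th column of $\mathfrak{H}^l_{d-1}$ has vanishing entries in rows $0$ and $d$.

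Now set $W=\{v=(v_0,\dots,v_d)\in\RR^{d+1}:v_0=v_d=0\}$, with basis $e_1,\dots,e_{d-1}$. The first and last rows of $\mathfrak{H}^l_{d-1}$ being $e_0^\top$ and $e_d^\top$ says that $(\mathfrak{H}^l_{d-1}v)_0=v_0$ and $(\mathfrak{H}^l_{d-1}v)_d=v_d$ for every $v$, so $\mathfrak{H}^l_{d-1}$ maps $W$ into itself; by the column computation above, the matrix of the restriction $\mathfrak{H}^l_{d-1}|_W$ in the basis $e_1,\dots,e_{d-1}$ is precisely $\hat{\mathfrak{H}}^l_{d-1}$. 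By Proposition \ref{le:diagonalizble}, $\mathfrak{H}^l_{d-1}$ is diagonalizable, so its minimal polynomial is a product of distinct linear factors; the minimal polynomial of $\mathfrak{H}^l_{d-1}|_W$ divides that of $\mathfrak{H}^l_{d-1}$ and therefore also splits into distinct linear factors, whence $\mathfrak{H}^l_{d-1}|_W=\hat{\mathfrak{H}}^l_{d-1}$ is diagonalizable. I do not expect a genuine obstacle here; the only point requiring a little care is the combinatorial description of the extremal rows of $\mathfrak{H}^l_{d-1}$, which is the short argument indicated above.
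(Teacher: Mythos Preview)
Your argument is correct and is a cleaner route to the bare statement than the paper's. Both proofs begin from the same structural observation---that the first and last rows of $\mathfrak{H}^l_{d-1}$ are $e_0^\top$ and $e_d^\top$---but diverge from there. The paper factors the characteristic polynomial as $(1-t)^2$ times that of $\hat{\mathfrak{H}}^l_{d-1}$, reads off the eigenvalues, and then shows directly that the $1$-eigenspace of $\hat{\mathfrak{H}}^l_{d-1}$ has dimension $l-1$ by choosing a basis of eigenvectors of $\mathfrak{H}^l_{d-1}$ in which $l-1$ of the eigenvectors for $1$ have vanishing first and last coordinates, so their truncations give independent eigenvectors of $\hat{\mathfrak{H}}^l_{d-1}$. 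You instead recognise that $W=\{v:v_0=v_d=0\}$ is $\mathfrak{H}^l_{d-1}$-invariant with restriction matrix $\hat{\mathfrak{H}}^l_{d-1}$, and invoke the minimal-polynomial criterion to conclude diagonalizability at once.

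Your approach is shorter and more conceptual for this lemma in isolation. The trade-off is that the paper's explicit construction is not wasted work: the proof of Theorem~\ref{th:eigenvector} parts (2) and (3) simply cites ``the proof of Lemma~\ref{le:innermatrixH}'' to obtain that the eigenvectors $\omega_1^{(3)},\ldots,\omega_1^{(l+1)},\omega_{l+1},\ldots,\omega_d$ can be taken with vanishing first and last entries. If you adopt your proof, those parts of Theorem~\ref{th:eigenvector} would need a separate (easy) argument, essentially reproducing the paper's eigenvector-selection step. Also note that the paper asserts the shape of the extremal rows by reference to Theorem~\ref{thm:main} rather than by the direct combinatorial count you give; your count is a useful addition.
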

    \begin{proof}
      By definition and \ref{thm:main} the first row of $\mathfrak{H}^l_{d-1}$ is the first unit vector and the
      last row of $\mathfrak{H}^l_{d-1}$ is the $(d+1)$\textsuperscript{st} unit vector. Thus the characteristic
      polynomial of $\mathfrak{H}^l_{d-1}$ splits into $(1-t)^2$ times the characteristic polynomial of
      $\hat{\mathfrak{H}}^l_{d-1}$. Therefore, $\hat{\mathfrak{H}}^l_{d-1}$ has eigenvalues
      $$\underbrace{1,\ldots,1}_{(l-1)-\mbox{times}},\frac{(l+1)!}{l!},\ldots,\frac{d!}{l!}.$$
      To show the matrix $\hat{\mathfrak{H}}^l_{d-1}$ is diagonalizable, it is enough to show that the eigenspace for
      the eigenvalue $1$ is of dimension $l-1$.

      For this we again consider the full matrix $\mathfrak{H}^l_{d-1}$. Since $\mathfrak{H}^l_{d-1}$ is diagonalizable
      there is a basis $\omega_1^{(1)},\ldots,$ $\omega_1^{(l+1)},$ $\omega_{l+1},\ldots,\omega_{d}$ of $\RR^{d+1}$
      consisting of eigenvectors of $\mathfrak{H}^l_{d-1}$. We can choose the numbering such that
      $\omega_1^{(i)},1\leq i\leq l+1$ are eigenvectors for the eigenvalue $1$
      and $\omega_j$ is an eigenvector for the eigenvalues $\frac{j!}{l!},l+1 \leq j\leq d$, respectively.

      Again, since the first and last row of $\mathfrak{H}^l_{d-1}$ are the first and $(d+1)$\textsuperscript{st}
      unit vector we can choose the eigenvectors of $\mathfrak{H}^l_{d-1}$ for the eigenvalue $\lambda=1$ as follows:
      $\omega_1^{(1)}$ and $\omega_1^{(2)}$ can be chosen such that
      $$\omega_1^{(1)}=(1,k_{11},\ldots,k_{1(d-1)},0) \text{ and } \omega_1^{(2)}=(0,k_{21},\ldots,k_{2(d-1)},1),$$
      and $\omega_1^{(i)}$ can be chosen such that $\omega_1^{(i)}=(0,k_{i1},\ldots,k_{i(d-1)},0)$ for
      $3\leq i\leq l+1$. Clearly, this implies that
      deleting the leading and trailing $0$ form the $\omega_1^{(i)}$ for $3\leq i\leq l+1$ yields
      eigenvectors $\hat{\omega_1}^{(i)}=(k_{i1},\ldots,k_{i(d-1)})$ of $\hat{\mathfrak{H}}^l_{d-1}$
      for the eigenvalue $\lambda=1$. Obviously, the set of vectors
      $\{\hat{\omega_1}^{(3)},\ldots,\hat{\omega_1}^{(l+1)}\}$ is linearly independent. Hence we have
      shown that the dimension of the eigenspace for the eigenvalue $1$ of $\hat{\mathfrak{H}}^l_{d-1}$
      $l-1$.
    \end{proof}

    The above lemma is a key ingredient in proving the following theorem.

    \begin{Theorem}
    \label{th:eigenvector}
      Let $d\geq 2$ and let $\omega_1^{(1)},\ldots,\omega_1^{(l+1)},\omega_{l+1},\ldots,\omega_{d}$ be a basis of
      eigenvectors of the matrix $\mathfrak{H}^l_{d-1}$, where $\omega_1^{(i)},1\leq i\leq l+1$ are eigenvectors
      for the eigenvalue $1$ and $\omega_j$ is an eigenvector for the eigenvalue $\frac{j!}{l!},l+1 \leq j\leq d$.
      \begin{enumerate}
        \item Let $\Delta$ be a $(d-1)$-dimensional simplicial complex. If we expand $h$-vector of $\Delta$ in
          terms of eigenvectors of the matrix $\mathfrak{H}^l_{d-1}$, the coefficient of the eigenvector for the
          eigenvalue $\frac{d!}{l!}$ is non-zero.
        \item The first and the last coordinate entry in
          $\omega_1^{(3)},\ldots,\omega_1^{(l+1)},\omega_{l+1},\ldots,\omega_{d}$ is zero.
        \item The vectors $\omega_1^{(1)}$ and $\omega_1^{(2)}$ can be chosen such that
          $$\omega_1^{(1)}=(1,i_1,\ldots,i_{d-1},0)\text{ and }\omega_1^{(2)}=(0,j_1,\ldots,j_{d-1},1).$$
        \item The vector $\omega_d$ can be chosen such that
          $\omega_d=(0,b_1,\ldots,b_{d-1},0)$ for strictly positive rational numbers $b_i$, $1\leq i\leq d-1$.
      \end{enumerate}
    \end{Theorem}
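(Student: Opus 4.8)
My plan is to prove the four parts largely separately, exploiting throughout the shape of $\mathfrak{H}^l_{d-1}$ recorded in the proof of Lemma~\ref{le:innermatrixH}: its first row is the standard unit vector $e_0$ and its last row is $e_d$. A first observation I would make is that every eigenvector $v=(v_0,\ldots,v_d)$ of $\mathfrak{H}^l_{d-1}$ for an eigenvalue $\lambda\neq 1$ satisfies $v_0=v_d=0$, by comparing first and last coordinates in $\mathfrak{H}^l_{d-1}v=\lambda v$. Since the eigenvalues $\tfrac{j!}{l!}$ (for $l+1\le j\le d$) all exceed $1$, this already gives the statement of (2) for $\omega_{l+1},\ldots,\omega_d$.

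For (1) I would invoke Lemma~\ref{sumH}, according to which the all-ones row vector $\mathbf 1$ is a left eigenvector of $\mathfrak{H}^l_{d-1}$ for the eigenvalue $\tfrac{d!}{l!}$. Expanding $h^\Delta$ in the eigenbasis $\omega_1^{(1)},\ldots,\omega_1^{(l+1)},\omega_{l+1},\ldots,\omega_d$ and pairing with $\mathbf 1$, Corollary~\ref{Co:initialeigenvectors} (equivalently, the orthogonality of left and right eigenvectors belonging to distinct eigenvalues) makes every term with eigenvalue $\neq\tfrac{d!}{l!}$ drop out, so the coefficient $c_d$ of $\omega_d$ equals $(\mathbf 1\cdot h^\Delta)/(\mathbf 1\cdot\omega_d)$. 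Here $\mathbf 1\cdot\omega_d\neq 0$, since otherwise $\mathbf 1$ would annihilate a basis of $\RR^{d+1}$, and $\mathbf 1\cdot h^\Delta=\sum_{i=0}^{d}h^\Delta_i=\sum_{i=0}^{d}f^\Delta_{i-1}\ge f^\Delta_{-1}=1$; hence $c_d\neq 0$.

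For (2) and (3) I would study the eigenspace $E_1$ of $\mathfrak{H}^l_{d-1}$ for the eigenvalue $1$, which by Proposition~\ref{le:diagonalizble} has dimension $l+1$, together with the evaluation map $\phi\colon E_1\to\RR^2$, $\phi(v)=(v_0,v_d)$. Using once more that the first and last rows of $\mathfrak{H}^l_{d-1}$ are $e_0,e_d$, a vector $(0,\hat v,0)$ lies in $\ker\phi$ exactly when $\hat v$ lies in the eigenspace of $\hat{\mathfrak{H}}^l_{d-1}$ for the eigenvalue $1$, which has dimension $l-1$ by Lemma~\ref{le:innermatrixH}; hence $\phi$ is surjective. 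Picking preimages of $(1,0)$ and $(0,1)$ under $\phi$ gives eigenvectors $\omega_1^{(1)},\omega_1^{(2)}$ as in (3), while a basis $\omega_1^{(3)},\ldots,\omega_1^{(l+1)}$ of $\ker\phi$ has vanishing first and last coordinate; these $l+1$ vectors form a basis of $E_1$, so together with the first paragraph we obtain (2) and (3).

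The main obstacle is (4), the \emph{strict positivity}, which I would deduce from the Perron--Frobenius theorem. By (2) we may write $\omega_d=(0,b_1,\ldots,b_{d-1},0)$, where $(b_1,\ldots,b_{d-1})$ is an eigenvector of $\hat{\mathfrak{H}}^l_{d-1}$ for $\tfrac{d!}{l!}$. The key point is that $\hat{\mathfrak{H}}^l_{d-1}$ is a \emph{strictly positive} matrix: its entries are the numbers $h^{(d-1,l)}_{ij}$ with $1\le i,j\le d-1$, which were shown to be at least $1$ in the proof of Corollary~\ref{Co:proof} (using Proposition~\ref{pr:ineq} and Lemma~\ref{le:entries}). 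Its eigenvalues, by Lemma~\ref{le:innermatrixH}, are $1,\ldots,1,\tfrac{(l+1)!}{l!},\ldots,\tfrac{d!}{l!}$, of which $\tfrac{d!}{l!}$ is the largest and simple; hence $\tfrac{d!}{l!}$ is its spectral radius, and Perron--Frobenius provides a strictly positive vector spanning the corresponding eigenspace. Rationality is then automatic, since this eigenspace is the null space of the integer matrix $\hat{\mathfrak{H}}^l_{d-1}-\tfrac{d!}{l!}\,I$, so after rescaling we obtain $b_i\in\QQ_{>0}$ and $\omega_d=(0,b_1,\ldots,b_{d-1},0)$ as desired. I expect the only delicate check to be that Perron--Frobenius really applies, i.e.\ that $\tfrac{d!}{l!}$ is the dominant eigenvalue; this is immediate here because all eigenvalues of $\hat{\mathfrak{H}}^l_{d-1}$ turn out to be positive reals. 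A route avoiding Perron--Frobenius would be to iterate $\sd^l$ on the simplex $\Delta_{d-1}$: by the positivity of the inner entries of $\mathfrak{H}^l_{d-1}$ (as in the proof of Corollary~\ref{Co:proof}) the inner entries of the $h$-vector become and stay positive, so the normalized iterates, whose limit is a nonzero multiple of $\omega_d$ by (1), force the inner entries of $\omega_d$ to have a constant sign, after which the eigenvector equation together with the positivity of the inner block of $\mathfrak{H}^l_{d-1}$ upgrades "constant sign" to "strictly positive".
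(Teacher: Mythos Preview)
Your argument for parts (2)--(4) is essentially identical to the paper's: the paper also reads (2) and (3) off from the proof of Lemma~\ref{le:innermatrixH}, and for (4) applies Perron--Frobenius to the strictly positive inner matrix $\hat{\mathfrak{H}}^l_{d-1}$. Your treatment is somewhat more explicit (you spell out the map $\phi$ and the rank--nullity count, and you add the rationality argument), but the substance is the same.

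For (1) you take a slightly different, though equivalent, route. The paper works on the $f$-vector side: by Lemma~\ref{le:Fd} every eigenvector of $\mathfrak{F}^l_{d-1}$ for an eigenvalue $\neq d!/l!$ has last coordinate zero, and $f^\Delta_{d-1}\neq 0$ forces a nonzero coefficient on the top eigenvector; similarity with $\mathfrak{H}^l_{d-1}$ then gives the claim. You instead pair with the left eigenvector $\mathbf 1$ on the $h$-vector side. These are really the same argument under the $f$-to-$h$ change of basis, since the functional ``last coordinate of the $f$-vector'' becomes ``sum of the $h$-vector''.

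One slip to fix: you write $\mathbf 1\cdot h^\Delta=\sum_i h_i^\Delta=\sum_i f_{i-1}^\Delta$, but this identity is false. The correct relation (evaluate $\sum_i h_i^\Delta x^{d-i}=\sum_i f^\Delta_{i-1}(x-1)^{d-i}$ at $x=1$) is $\sum_i h_i^\Delta=f^\Delta_{d-1}$. Your conclusion survives, since $\Delta$ is $(d-1)$-dimensional and hence $f^\Delta_{d-1}>0$, but the justification as written needs this correction.
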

    \begin{proof}
      Let us expand the $f$-vector of $\Delta$ in terms of a basis of eigenvectors of the matrix
      $\mathfrak{F}^l_{d-1}$. Since $f^{\Delta}_{d-1}\not=0$ from Lemma \ref{le:Fd} we deduce that the
      coefficient of the eigenvector for the highest eigenvalue is non-zero.
      Since $\mathfrak{F}^l_{d-1}$ and $\mathfrak{H}^l_{d-1}$ are similar so (1) follows.

      Assertions (2) and (3) immediately follow from the proof of Lemma \ref{le:innermatrixH}.

      For (4) consider the matrix $\hat{\mathfrak{H}}^l_{d-1}$ as defined above. It is easily seen that the
      entries of $\hat{\mathfrak{H}}^l_{d-1}$ are strictly positive numbers. Therefore, by the
      Perron-Frobenius Theorem \cite{HJ} it follows that there is an eigenvector $\hat{\omega}^l_d$
      for the eigenvalue $\frac{d!}{l!}$ with strictly positive entries.
      Hence $(0,\hat{\omega}^l_d,0)$ is the required eigenvector.
    \end{proof}

\section{Open Problems}
    In this section we discuss some of the open problems related to the above work.

    Corollary \ref{Co:initialeigenvectors} describes properties of the eigenvectors of the matrix
    $\mathfrak{H}^l_{d-1}$ for the eigenvalue $\lambda$ such that $\lambda\not=\frac{d!}{l!}$.
    For the eigenvalue $\lambda=\frac{d!}{l!}$ we were able to deduce its non-negativity in
    Theorem \ref{th:eigenvector} (4) but were not able to give more structural results or even
    provide an explicit description.
    By \cite{D} when applying $l$\textsuperscript{th} partial barycentric subdivision iteratively
    the limiting behavior of the $h$-vector is determined by this eigenvector, in a sense
    specified in \cite{D}. Hence some information can be read off from \cite{D} nevertheless
    complete information about that eigenvector would be desirable.

    For example, for $d=4$ we have following eigenvectors, corresponding to the
    eigenvalues $\frac{4!}{3!}, \frac{4!}{2!}, \frac{4!}{1!}$, respectively.
    $$\left(
        \begin{array}{c}
          0 \\
          1 \\
          1 \\
          1 \\
          0 \\
        \end{array}
      \right),
      \left(
        \begin{array}{c}
          0 \\
          1 \\
          \frac{5}{3} \\
          1 \\
          0 \\
        \end{array}
      \right),
      \left(
        \begin{array}{c}
          0 \\
          1 \\
          \frac{7}{2} \\
          1 \\
          0 \\
        \end{array}
      \right).
    $$
    For $d=5$ we have the following eigenvectors, corresponding to the eigenvalues
    $\frac{5!}{4!}, \frac{5!}{3!}, \frac{5!}{2!}, \frac{5!}{1!}$, respectively.
    $$
    \left(
        \begin{array}{c}
          0 \\
          1 \\
          1 \\
          1 \\
          1 \\
          0 \\
        \end{array}
      \right),
      \left(
        \begin{array}{c}
          0 \\
          1 \\
          \frac{12}{7} \\
          \frac{12}{7} \\
          1 \\
          0 \\
        \end{array}
      \right),
      \left(
        \begin{array}{c}
          0 \\
          1 \\
          \frac{46}{11} \\
          \frac{46}{11} \\
          1 \\
          0 \\
        \end{array}
      \right),
      \left(
        \begin{array}{c}
          0 \\
          1 \\
          \frac{17}{2} \\
          \frac{17}{2} \\
          1 \\
          0 \\
        \end{array}
      \right).
    $$
    Similarly, for $d=6$ we have following eigenvectors, corresponding to the eigenvalues
    $\frac{6!}{5!},\frac{6!}{4!}, \frac{6!}{3!}, \frac{6!}{2!}, \frac{6!}{1!}$, respectively.
    $$
    \left(
        \begin{array}{c}
          0 \\
          1 \\
          1 \\
          1 \\
          1 \\
          1 \\
          0 \\
        \end{array}
      \right),
      \left(
        \begin{array}{c}
          0 \\
          1 \\
          \frac{7}{4} \\
          \frac{7}{4} \\
          \frac{7}{4} \\
          1 \\
          0 \\
        \end{array}
      \right),
      \left(
        \begin{array}{c}
          0 \\
          1 \\
          \frac{1941}{437} \\
          \frac{2146}{437} \\
          \frac{1941}{437} \\
          1 \\
          0 \\
        \end{array}
      \right),
      \left(
        \begin{array}{c}
          0 \\
          1 \\
          \frac{5431}{527} \\
          \frac{8906}{527} \\
          \frac{5431}{527} \\
          1 \\
          0 \\
        \end{array}
      \right),
      \left(
        \begin{array}{c}
          0 \\
          1 \\
          \frac{586}{33} \\
          \frac{5459}{132} \\
          \frac{586}{33} \\
          1 \\
          0 \\
        \end{array}
      \right).
    $$
    Thus the following problem appears to be interesting.
    \begin{Problem}
      Give a description of eigenvectors of the matrices $\mathfrak{F}^l_{d-1}$ and $\mathfrak{H}^l_{d-1}$
      for the eigenvalue $\frac{d!}{l!}$.
    \end{Problem}

    Let $V$ be a vertex set such that $\# V=d$ and let $2^V$ denote the simplex with vertex set $V$.
    Let $\Gamma$ be the first barycentric subdivision of $2^V$.
    The $h$-polynomial $h(\Gamma,x) = \sum_{i=0}^{d} h_ix^{d-i}$ of $\Gamma$ has the following combinatorial
    interpretation.
    \begin{equation}
    \label{hvectorexc}
      h(\Gamma,x) = \sum_{\sigma \in S_d}x^{\des(\sigma)} =\sum_{\sigma \in S_d}x^{\ex(\sigma)},
    \end{equation}
    where $\ex(\sigma)$ denotes the number of {\em excedances} of $\sigma$, defined by
    $$\ex(\sigma)=\# \{i ~|~\sigma(i)>i\},$$
    The first equality follows from \cite[Theorem 3.13.1]{St2} (it is also a consequence of \cite[Thm 1]{BW} and
    Theorem \ref{thm:main}),
    and the second is a consequence of \cite[Proposition 1.4.3]{St2}.
    In \cite{St}, the {\em local $h$-polynomial} $\ell_V(\Gamma,x)$ of $\Gamma$ has been defined and given in a
    similar passion of equation
    \eqref{hvectorexc} as follows:
    \begin{equation}
    \label{localhvector}
      \ell_V(\Gamma,x)=\sum_{\sigma\in D_d}x^{\ex(\sigma)},
    \end{equation}
    where $D_d$ denotes the set of all derangements in $S_d$. We suggest the followings:

    \begin{Problem}
      Give an interpretation of local $h$-polynomial for the $l$\textsuperscript{th} partial barycentric
      subdivision similar to \eqref{localhvector} in terms of a suitably defined $l$-excedance statistic
      on a newly defined set of $l$-derangements satisfying an analog of \eqref{hvectorexc}
    \end{Problem}

    Already the question of finding a statistic on $S_d^l$ fulfilling a statement analogous to
    \eqref{hvectorexc} seems to be hard and challenging.
    We note that in \cite{Ath} a theory of local $\gamma$-vectors of subdivisions was initiated.

    \begin{Problem}
    \label{problm 3}
      Define an $l$-excedance statistic on $S_d^l$ such that the $l$-excedance and $l$-descent statistic
      on $S_d^l$ are equally distributed; i.e. satisfy an analog of \eqref{localhvector}.
    \end{Problem}

    For Problem \ref{problm 3} we tried, different approaches. Despite not yielding a solution to the
    problem the following idea resulted in some interesting data.
    We define an injective map say $\chi:S_d^l \rightarrow S_d$ in the following way. Let $\sigma\in S_d^l$
    such that $\sigma=(\sigma(1),\sigma(2),\ldots,\sigma(d)),$ with first $l$ elements are in descending order, then:
    $$\chi(\sigma)=\left\{
                     \begin{array}{ll}
                       (\sigma(l),\ldots,\sigma(1),\sigma(l+1),\ldots,\sigma(d)), & \hbox{if $\sigma(l+1)>\sigma(1)$} \\
                        & \hbox{or $\sigma(l)>\sigma(l+1)$;} \\
                       (\sigma(l_1),\ldots,\sigma(1),\sigma(l),\ldots,\sigma(l_1+1), & \\
                       \sigma(l+1),\ldots, \sigma(d)), &  \hbox{if $\sigma(l_1)>\sigma(l+1)$}\\
                       &  \hbox{and $\sigma(l_1+1)<\sigma(l+1)$}.\\
                     \end{array}
                   \right.$$
    Now define the number of $l$-excedances $\ex^l(\sigma)$ of $\sigma\in S_d^l$ to be number of usual excedances
    of $\chi(\sigma)$, i.e.  $$\ex^l(\sigma):=\#\{i~|~\chi(\omega)(i)>i\}.$$
    We apply this definition for different values of $d$ and $l$. For a fixed $d$, the
    $l$-descent and $l$-excedance statistic are equally distributed on $S_d^l$ for $l=d-1$ and $l=d-2$.
    But for other values of $l$ the two statistics appear to be different. Nevertheless, the obtained data has some
    surprising and unexplained symmetry. For example, for
    $S_5^l$ we have following tables for the number of $l$-descents,
    \begin{center}
      \begin{tabular}{cc|c|c|c|c}
        \cline{3-5}
        & & \multicolumn{3}{|c|}{$l=$} \\ \cline{3-5}
        & & \cellcolor[gray]{.8} 4 & \cellcolor[gray]{.8} 3 & \cellcolor[gray]{.8} 2 \\ \cline{1-5}
        \multicolumn{1}{|c|}{\multirow{2}{*}{$\#$ of $l$-descents =}} &
        \multicolumn{1}{|c|}{\cellcolor[gray]{.8} 0} & 1 & 1 & 1 &      \\ \cline{2-5}
        \multicolumn{1}{|c|}{}                        &
        \multicolumn{1}{|c|}{\cellcolor[gray]{.8} 1} & 1 & 6 & 16 &     \\ \cline{2-5}
        \multicolumn{1}{|c|}{}                        &
        \multicolumn{1}{|c|}{\cellcolor[gray]{.8} 2} & 1 & 6 & 26 &      \\ \cline{2-5}
        \multicolumn{1}{|c|}{}                        &
        \multicolumn{1}{|c|}{\cellcolor[gray]{.8} 3} & 1 & 6 & 16 &     \\ \cline{2-5}
        \multicolumn{1}{|c|}{}                        &
        \multicolumn{1}{|c|}{\cellcolor[gray]{.8} 4} & 1 & 1 & 1 &      \\ \cline{1-5}
      \end{tabular}
    \end{center}
    and the following table for the number of $l$-excedances.
    \begin{center}
      \begin{tabular}{cc|c|c|c|c}
        \cline{3-5}
        & & \multicolumn{3}{|c|}{$l=$} \\ \cline{3-5}
        & & \cellcolor[gray]{.8} 4 & \cellcolor[gray]{.8} 3 & \cellcolor[gray]{.8} 2 \\ \cline{1-5}
        \multicolumn{1}{|c|}{\multirow{2}{*}{$\#$ of $l$-excedances =}} &
        \multicolumn{1}{|c|}{\cellcolor[gray]{.8} 0} & 1 & 1 & 1 &      \\ \cline{2-5}
        \multicolumn{1}{|c|}{}                        &
        \multicolumn{1}{|c|}{\cellcolor[gray]{.8} 1} & 1 & 6 & 14 &     \\ \cline{2-5}
        \multicolumn{1}{|c|}{}                        &
        \multicolumn{1}{|c|}{\cellcolor[gray]{.8} 2} & 1 & 6 & 30 &      \\ \cline{2-5}
        \multicolumn{1}{|c|}{}                        &
        \multicolumn{1}{|c|}{\cellcolor[gray]{.8} 3} & 1 & 6 & 14 &     \\ \cline{2-5}
        \multicolumn{1}{|c|}{}                        &
        \multicolumn{1}{|c|}{\cellcolor[gray]{.8} 4} & 1 & 1 & 1 &      \\ \cline{1-5}
      \end{tabular}
    \end{center}
    Similarly, for $S_6^l$ the number of $l$-descents are shown in the following table,
    \begin{center}
      \begin{tabular}{cc|c|c|c|c|c}
        \cline{3-6}
        & & \multicolumn{4}{|c|}{$l=$} \\ \cline{3-6}
        & & \cellcolor[gray]{.8} 5 & \cellcolor[gray]{.8} 4 & \cellcolor[gray]{.8} 3 & \cellcolor[gray]{.8} 2 \\ \cline{1-6}
        \multicolumn{1}{|c|}{\multirow{2}{*}{$\#$ of $l$-descents =}} &
        \multicolumn{1}{|c|}{\cellcolor[gray]{.8} 0} & 1 & 1 & 1 & 1 &     \\ \cline{2-6}
        \multicolumn{1}{|c|}{}                        &
        \multicolumn{1}{|c|}{\cellcolor[gray]{.8} 1} & 1 & 7 & 22 & 42 &     \\ \cline{2-6}
        \multicolumn{1}{|c|}{}                        &
        \multicolumn{1}{|c|}{\cellcolor[gray]{.8} 2} & 1 & 7 & 37 & 137 &     \\ \cline{2-6}
        \multicolumn{1}{|c|}{}                        &
        \multicolumn{1}{|c|}{\cellcolor[gray]{.8} 3} & 1 & 7 & 37 & 137 &     \\ \cline{2-6}
        \multicolumn{1}{|c|}{}                        &
        \multicolumn{1}{|c|}{\cellcolor[gray]{.8} 4} & 1 & 7 & 22 & 42 &     \\ \cline{2-6}
        \multicolumn{1}{|c|}{}                        &
        \multicolumn{1}{|c|}{\cellcolor[gray]{.8} 5} & 1 & 1 & 1 & 1 &     \\ \cline{1-6}
      \end{tabular}
    \end{center}
    and the number of $l$-excedances are shown in the following table.
    \begin{center}
      \begin{tabular}{cc|c|c|c|c|c}
        \cline{3-6}
        & & \multicolumn{4}{|c|}{$l=$} \\ \cline{3-6}
        & & \cellcolor[gray]{.8} 5 & \cellcolor[gray]{.8} 4 & \cellcolor[gray]{.8} 3 & \cellcolor[gray]{.8} 2 \\ \cline{1-6}
        \multicolumn{1}{|c|}{\multirow{2}{*}{$\#$ of $l$-excedances =}} &
        \multicolumn{1}{|c|}{\cellcolor[gray]{.8} 0} & 1 & 1 & 1 & 1 &     \\ \cline{2-6}
        \multicolumn{1}{|c|}{}                        &
        \multicolumn{1}{|c|}{\cellcolor[gray]{.8} 1} & 1 & 7 & 17 & 33 &     \\ \cline{2-6}
        \multicolumn{1}{|c|}{}                        &
        \multicolumn{1}{|c|}{\cellcolor[gray]{.8} 2} & 1 & 7 & 42 & 146 &     \\ \cline{2-6}
        \multicolumn{1}{|c|}{}                        &
        \multicolumn{1}{|c|}{\cellcolor[gray]{.8} 3} & 1 & 7 & 42 & 146 &     \\ \cline{2-6}
        \multicolumn{1}{|c|}{}                        &
        \multicolumn{1}{|c|}{\cellcolor[gray]{.8} 4} & 1 & 7 & 17 & 33 &     \\ \cline{2-6}
        \multicolumn{1}{|c|}{}                        &
        \multicolumn{1}{|c|}{\cellcolor[gray]{.8} 5} & 1 & 1 & 1 & 1 &     \\ \cline{1-6}
      \end{tabular}
    \end{center}

    We close by briefly mentioning an interesting problem relating to the $\gamma$-vector
    introduced by Gal \cite{G}. The behavior of the $\gamma$-vector under barycentric
    subdivision was studied in \cite{N}. In \cite{Ath} a theory of local $\gamma$-vectors are started and
    in \cite[Theorem 1.5]{AthSav} a nice interpretation of the local $\gamma$-vector was given for
    the classical barycentric subdivision. Again one can ask the same questions for the $\gamma$ and
    local $\gamma$-vector of the $l$\textsuperscript{th} partial barycentric subdivision.

\end{document}